\documentclass[10pt]{amsart}


\oddsidemargin=8pt
\evensidemargin=8pt
\textheight=582pt
\textwidth=465pt


\usepackage{fancyhdr}
\usepackage{stmaryrd}
\usepackage{calrsfs}

\usepackage{amsmath}
\usepackage[nospace,noadjust]{cite}
\usepackage{amsfonts}
\usepackage{amssymb,enumerate}
\usepackage{amsthm}
\usepackage{cite}
\usepackage{comment}
\usepackage{color}
\usepackage[all]{xy}
\usepackage{hyperref}
\usepackage{lineno}


\pagestyle{fancy}
\fancyhf{}

\fancyhead[CE]{\fontsize{9}{11}\selectfont A. GEROLDINGER AND F. GOTTI}
\fancyhead[CO]{\fontsize{9}{11}\selectfont ON MONOID ALGEBRAS HAVING EVERY NONEMPTY SUBSET OF $\mathbb{N}_{\ge 2}$ AS A LENGTH SET}
\fancyhead[LE,RO]{\thepage}


\newtheorem*{maintheorem*}{Main Theorem}
\newtheorem{theorem}{Theorem}[section]
\newtheorem{prop}[theorem]{Proposition}

\newtheorem{lemma}[theorem]{Lemma}

\theoremstyle{definition}

\newtheorem{remark}[theorem]{Remark}
\newtheorem{example}[theorem]{Example}
\numberwithin{equation}{section}


\newcommand{\nn}{\mathbb{N}}
\newcommand{\pp}{\mathbb{P}}
\newcommand{\qq}{\mathbb{Q}}
\newcommand{\rr}{\mathbb{R}}
\newcommand{\zz}{\mathbb{Z}}

\newcommand{\red}{{\text{\rm red}}}

\providecommand\ldb{\llbracket}
\providecommand\rdb{\rrbracket}

\hypersetup{pdftitle={FD}, colorlinks=true, linkcolor=blue, citecolor=cyan, urlcolor=wine}

\begin{document}
	
\mbox{}
\title{On monoid algebras having \\ every nonempty subset of $\nn_{\ge 2}$ as a length set}

\author{Alfred Geroldinger}
\address{Department of Mathematics and Scientific Computing\\ University of Graz,  Heinrichstr. 36\\ 8010 Graz, Austria}
\email{alfred.geroldinger@uni-graz.at}
\urladdr{https://imsc.uni-graz.at/geroldinger}

\author{Felix Gotti}
\address{Department of Mathematics\\Massachusetts Institute of Technology\\Cambridge, MA 02139, USA}
\email{fgotti@mit.edu}
\urladdr{https://felixgotti.com}

\subjclass[2020]{13A05,13G05, 20M13}

\thanks{While preparing this paper, the first author was supported by the Austrian Science Fund FWF (Project P36852-N), while the second author was supported by the NSF award DMS-2213323.}

\keywords{monoid algebra, Puiseux monoid, ACCP, set of lengths}

\begin{abstract}
	We construct monoid algebras which satisfy the ascending chain condition on principal ideals and which have the property that every nonempty subset of $\mathbb N_{\ge 2}$ occurs as a length set.
\end{abstract}
\medskip

\maketitle

\bigskip
\section{Introduction}
\label{sec:intro}
\smallskip

Let $D$ be a (commutative integral) domain or a (commutative cancellative) monoid. If $D$ satisfies the ascending chain condition on principal ideals (ACCP), then $D$ is atomic, that is, every non-zero non-invertible element of $D$  can be written as a product of atoms. If $b = a_1 \cdots a_\ell \in D$, where $\ell \in \mathbb N$ and $a_1, \ldots, a_\ell$ are atoms of~$D$, then $\ell$ is called a factorization length of $b$ and the set $\mathsf L (b) \subseteq \mathbb N$ of all factorization lengths is called the length set of $b$. If all length sets of $D$ are  finite and nonempty, then $D$ is called a BF-domain resp. a BF-monoid, in which case we say that $D$ satisfies the bounded factorization property. It is well known that a domain/monoid satisfies the bounded factorization property provided that it satisfies the ascending chain condition on divisorial ideals, which holds for all Noetherian domains and all Krull monoids, and so all Krull domains. It is also well known that every domain/monoid satisfying the bounded factorization property must satisfy the ACCP.
\smallskip

The system $\mathcal L (D) = \{ \mathsf L (b) : b \in D\}$ consisting of all length sets of $D$, together with invariants controlling their structure, is a key arithmetic invariant describing the non-uniqueness of factorizations in $D$. There is a rich literature on length sets. However, so far the study of length sets has been taking place almost entirely on BF-domains/monoids. One type of results states that, under reasonable algebraic finiteness conditions (e.g., the finiteness of the class group in case of Krull monoids), length sets are highly structured (see the monograph \cite[Chapter 4]{Ge-HK06a} and the surveys \cite{Sc16a, Ge-Zh20a}, and see \cite{B-B-N-S23a} for progress in noncommutative rings). A second type of results reveals settings in which every  finite, nonempty subset of $\mathbb N_{\ge 2}$ occurs as a length set: these settings include Krull monoids with infinite class group, rings of integer-valued polynomials, and more (see \cite[Theorem 7.4.1]{Ge-HK06a}, \cite{fG20,Go19a, Fa-Zh23a, Fa-Wi24a}). In both settings the arithmetic of monoid algebras has found special attention (see \cite{Ki01b, Co-Go19a,  Go22a, Ch-Fa-Wi22a, Fa-Wi22a, Fa-Wi22b, Fa-Wi22c}).
\smallskip

Sporadic examples of atomic domains that do not satisfy the ACCP appeared in the literature a few decades ago (see~\cite{aG74,mR93,aZ82}). However, it was not until the last few years that detailed investigations were carried out to understand the subtle difference between atomic domains and domains satisfying the ACCP, or domains satisfying the bounded factorization property (see \cite{Tr23a}). For instance, in~\cite{GL23}, Grams' construction of the first atomic domain not satisfying the ACCP was significantly generalized, while in~\cite{GL23a} a weaker notion of the ACCP was introduced with the purpose to identify several classes of atomic domains not satisfying the ACCP. For more in this direction, see the recent survey \cite{Co-Go25a}. Besides the algebraic objects discussed in the survey, further constructions of atomic monoids, domains, and rings not satisfying the ACCP have been recently provided in~\cite{BC19,GL22,GL23a,B-B-N-S23a}.
\smallskip

In~\cite{CGG20a}, the arithmetic of factorizations of the rank-one additive monoids $\nn_0[q]$ (for positive rationals $q$) was investigated. Among other results in the same paper, the system of length sets of $\nn_0[q]$ is determined for every $q$ such that $\nn_0[q]$ is atomic. When $q \in (0,1)$ and $q^{-1} \notin \nn$, one can check that $\nn_0[q]$ is atomic but does not satisfy the ACCP and, in this case, the non-singleton length sets of $\nn_0[q]$ are infinite arithmetic progressions \cite[Theorem~3.3]{CGG20a}. Besides the computation in this very special monoid (carried out in~\cite{CGG20a}), no systematic study of infinite length sets in non-BF-domains/monoids has been provided so far. The present paper should be seen as a starting point in this direction and we start with an extreme case: we construct  integral domains whose systems of length sets are as large as they can possibly be. More precisely, we show that there are monoid algebras which satisfy the ACCP (but not the bounded factorization property) such that every nonempty subset of $\mathbb N_{\ge 2}$ occurs as a length set. We proceed to formulate the main result of this paper.
\smallskip

\begin{theorem} \label{1.1}
	Let $D$ be an integral domain that satisfies the {\rm ACCP}. Then there exists a positive submonoid~$M$ of a $\mathbb Q$-vector space such that the monoid algebra $D[M]$ satisfies the {\rm ACCP} and every nonempty subset of $\nn_{\ge 2}$ occurs as a length set of $D[M]$.
\end{theorem}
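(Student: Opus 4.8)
The plan is to reduce the statement to a purely monoid-theoretic one and then to carry out an explicit construction.

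\smallskip
\textit{Step 1: reduction to $M$.} Suppose $M$ is a positive submonoid of a $\qq$-vector space $V$, witnessed by a $\qq$-linear functional $\ell\colon V\to\qq$ with $\ell(M\setminus\{0\})\subseteq\qq_{>0}$. For $0\neq f=\sum_i c_iX^{m_i}\in D[M]$ (with $c_i\in D\setminus\{0\}$ and the $m_i\in M$ distinct) set $\ell_{\min}(f)=\min_i\ell(m_i)$ and $\ell_{\max}(f)=\max_i\ell(m_i)$. Since $D$ is a domain the terms of extremal $\ell$-value cannot cancel in a product, so $\ell_{\min}$ and $\ell_{\max}$ are additive; hence $X^m=fg$ forces $\ell_{\min}(f)=\ell_{\max}(f)$ and $\ell_{\min}(g)=\ell_{\max}(g)$, i.e. $f$ and $g$ are monomials. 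Iterating, and using that $X^{m'}$ is an atom of $D[M]$ exactly when $m'$ is an atom of $M$, one gets $\mathsf L_{D[M]}(X^m)=\mathsf L_M(m)$ for every $m\in M$. Together with the fact that $D[M]$ satisfies the $\ACCP$ whenever $D$ and $M$ do — a transfer statement for monoid algebras that one can cite or verify directly — this reduces the theorem to constructing a positive, atomic monoid $M$ inside a $\qq$-vector space which satisfies the $\ACCP$ and realizes every nonempty $L\subseteq\nn_{\ge 2}$ as a length set.

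\smallskip
\textit{Step 2: the building blocks.} Fix a nonempty $L\subseteq\nn_{\ge 2}$ and put $p=\min L$. In the $\qq$-vector space with basis $\{e\}\cup\{f^m_j: m\in L,\ j\in\zz/m\zz\}$ define $a^L_{m,j}=\tfrac1m e+f^m_j-f^m_{j+1}$ and let $M_L=\langle a^L_{m,j}: m\in L,\ j\in\zz/m\zz\rangle$. A coordinate computation shows that the only relations among the $a^L_{m,j}$ are generated by $\sum_{j\in\zz/m\zz}a^L_{m,j}=\sum_{j\in\zz/m'\zz}a^L_{m',j}$ for $m,m'\in L$; writing $e$ for this common element, it follows that the $a^L_{m,j}$ are exactly the atoms of $M_L$ and that the factorizations of $e$ are precisely $e=\sum_j a^L_{m,j}$ for $m\in L$, so that $\mathsf L_{M_L}(e)=L$. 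The coefficient of $e$ is a positive $\qq$-linear functional on $M_L$, so $M_L$ is a positive (and, being generated by atoms, atomic) monoid.

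\smallskip
\textit{Step 3: the $\ACCP$ for $M_L$.} This is the crux, and the reason the statement is not trivial: since $L$ may be infinite, $M_L$ need not be a BF-monoid, so bounded factorization is unavailable. I would argue with a normal form: given a product of atoms, repeatedly trade a full set $\{a^L_{m,j}:j\in\zz/m\zz\}$ with $m\neq p$ for the full set $\{a^L_{p,j}:j\in\zz/p\zz\}$; this yields a unique representation in which only the block $p$ may contain a full set. For $w\in M_L$ let $r(w)\in\tfrac1p\zz_{\ge0}$ be $\tfrac1p$ times the number of block-$p$ atoms in the normal form of $w$. Comparing normal forms shows $r(w)\geq r(w')$ whenever $w'\mid w$, so along a descending chain $w_0\succ w_1\succ\cdots$ the discrete values $r(w_i)$ are non-increasing, hence eventually constant. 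Once $r$ is constant, equality of $r$ at each step forces that step to need no renormalization, so from then on the normal forms $w_i$ decrease coordinatewise and are supported on a fixed finite set of blocks; the tail of the chain is therefore a descending chain in the submonoid generated by those finitely many atoms, a reduced affine and hence BF-monoid. Thus the chain stabilizes and $M_L$ satisfies the $\ACCP$.

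\smallskip
\textit{Step 4: assembly.} Let $M=\bigoplus_{\emptyset\neq L\subseteq\nn_{\ge2}}M_L$, a submonoid of the corresponding (large) $\qq$-vector space; combining the coefficient-of-$e$ functionals of the summands gives a positive functional on $M$, so $M$ is a positive monoid. The atoms of $M$ are the disjoint union of the atoms of the $M_L$, and a factorization of the copy $e_L$ of $e$ in the summand $M_L$ uses atoms of $M_L$ only, whence $\mathsf L_M(e_L)=\mathsf L_{M_L}(e_L)=L$; so every nonempty subset of $\nn_{\ge2}$ is a length set of $M$. Also $M$ satisfies the $\ACCP$: a descending chain in $M$ has support in finitely many summands and projects to weakly descending chains there, each of which stabilizes by Step 3. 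By Step 1, $D[M]$ then satisfies the $\ACCP$ and $\mathsf L_{D[M]}(X^{e_L})=L$ for every nonempty $L\subseteq\nn_{\ge2}$, as required. The main obstacle is the $\ACCP$ claim for the $M_L$ in Step 3 — precisely because these monoids are not BF — together with the monoid-algebra transfer invoked in Step 1.
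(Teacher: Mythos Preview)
Your overall strategy is sound and leads to a correct proof, but there is a gap in Step~1: from $\ell_{\min}(f)=\ell_{\max}(f)$ you conclude that $f$ is a monomial, yet this only follows if $\ell$ is injective on $M$, which it is not in your construction (for instance $a^L_{m,0}$ and $a^L_{m,1}$ have the same $e$-coefficient $1/m$, so $X^{a^L_{m,0}}+X^{a^L_{m,1}}$ has $\ell_{\min}=\ell_{\max}$ without being a monomial). The conclusion you need---that every divisor of a monomial in $D[M]$ is a monomial---is nevertheless true for any cancellative torsion-free $M$ (order $\mathsf q(M)$ totally and argue with leading terms, or just cite the standard fact), and the paper invokes exactly this. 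With that patch, Steps~2--4 are correct; in particular the normal-form argument in Step~3 goes through, and one clean way to see the monotonicity of your invariant $r$ is to note that $r(w)=\max\bigl\{\tfrac1p\sum_j c_{p,j}:(c_{m,j})\in\mathsf Z(w)\bigr\}$, so that adding the normal form of $w'$ to any factorization of $w-w'$ already gives $r(w)\ge r(w')$.

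The paper takes a rather different route to the building blocks: each $M_L$ there is a \emph{Puiseux monoid}, built by an inductive process that starts from a numerical monoid realizing an initial segment of $L$ (via an external realization theorem) and successively adjoins scaled numerical monoids $p^{-1}N$, with $p$ a large prime and $N=\langle b,c\rangle$ engineered so that $\mathsf L_N(p)=\{\ell_n\}$; the ACCP is obtained not by a direct combinatorial argument but by embedding the whole construction into $m_0^{-1}N_0+M_{\pp}$ (with $M_{\pp}=\langle 1/p:p\in\pp\rangle$) and applying a general lemma that $M_0+N_0$ inherits the ACCP when $M_0$ has it and $N_0$ is $s$-noetherian. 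Your construction is higher-rank but far more direct: the relations among the atoms are explicit from the start, $\mathsf L(e)=L$ falls out of a one-line coordinate computation, and the ACCP proof is self-contained and purely combinatorial, needing neither the numerical-monoid realization theorem nor any prime arithmetic. On the other hand the paper's $M_L$ are rank-one and positive in the coordinatewise sense, while your $M_L$ sit in an infinite-dimensional space and are only ``positive'' via a single separating functional; if the theorem's phrase ``positive submonoid'' is read as coordinatewise nonnegativity in some basis, this is a point you would need to address.
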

\smallskip

In Section \ref{sec:background}, we gather some preliminaries on factorizations in monoids and domains. In Section \ref{sec:overatomicity}, we construct, for any prescribed nonempty subset $L \subseteq \mathbb N_{\ge 2}$, a Puiseux monoid (i.e., an additive submonoid of~$\mathbb Q_{\ge 0})$ that satisfies the ACCP and which has $L$ as a length set (Theorem \ref{3.5}). The construction is based on a realization result for length sets in numerical monoids (Proposition \ref{realization-finite-set}) and on the properties of a very special Puiseux monoid (Proposition \ref{prop:prime reciprocal}). For this Puiseux monoid, we also determine the system of length sets, and this is the first example of a non-BF-monoid for which the system of length sets is fully determined. On our way, we also study how the ACCP property behaves under various algebraic constructions (Proposition~\ref{prop:ACCP}). The proof of our main theorem, Theorem \ref{1.1}, is provided in Section~\ref{4}.

\bigskip
\section{Background}  
\label{sec:background}

\smallskip
\subsection{General Notation} 

We denote by $\mathbb P, \mathbb N, \mathbb N_0, \mathbb Z, \mathbb Q$, and $\mathbb R$ the set of prime numbers, positive integers, non-negative integers, integers, rational integers, and real numbers, respectively. For a subset $S$ of $\rr$, we set $S^\bullet := S \setminus \{0\}$. For $b,c \in \mathbb R$, we let
\[
	\ldb b,c \rdb := \{ k \in \mathbb Z : b \le k \le c \}
\]
denote the discrete interval between $b$ and $c$. Let $A$ and $B$ be subsets of an additive abelian group $G$. Then $A+B = \{a+b : a \in A \text{ and } b \in B\} \subseteq G$ denotes their sumset. For $a \in G$, $a + B = \{a\}+B$,  and, for $n \in \mathbb N_0$,
\[
	nA := \underbrace{A + \dots + A}_\text{$n$ times}
\]
means the $n$-fold sumset, where $nA = \{0\}$ when $n=0$. For $q \in \mathbb Q^\bullet$, the unique integers $n$ and~$d$ with $d > 0$ such that $q = n/d$, and $\gcd(n,d)=1$ are denoted by $\mathsf{n}(q)$ and $\mathsf{d}(q)$, respectively. For any subset $Q$ of $\qq$ consisting of non-zero rationals, we set
\[
	\mathsf{d}(Q) := \{\mathsf{d}(q) : q \in Q\}.
\]
For $p \in \pp$ and $n \in \zz^\bullet$, we set $\mathsf v_p(n) := \max \{m \in \nn_0 : p^m \mid n\}$. Then for any prime $p \in \mathbb P$, we let $\mathsf v_p \colon \mathbb Q^{\bullet} \to \mathbb Z$ denote the $p$-adic valuation map: $\mathsf v_p(q) = \mathsf v_p(\mathsf{n}(q)) - \mathsf v_p(\mathsf{d}(q))$ for all $q \in \qq^\bullet$.

\medskip
\subsection{Monoids and Ideals} 

By a {\it monoid}, we mean a cancellative and commutative semigroup with an identity element. We will use both additive notation (for submonoids of $\mathbb Q$-vector spaces) and multiplicative notation (for monoids of non-zero elements of integral domains). We briefly gather some key concepts of factorization theory we shall be using later. For this we use  multiplicative notation for monoids, and note that all the notions and results we mention in the setting of monoids naturally adapt to the setting of integral domains by means of their multiplicative monoids.

For a set $P$, we let $\mathcal F (P)$ denote the free commutative monoid with basis $P$. An element $b \in \mathcal F (P)$ will be written in the form
\[
	b = \prod_{p \in P} p^{\mathsf v_p (b)},
\]
where, for each $p \in P$, the map $\mathsf v_p \colon \mathcal F (P) \to \mathbb N_0$ is the $p$-adic valuation map on $\mathcal F(P)$. Furthermore, we call
\[
	|b| = \sum_{p \in P} \mathsf v_p (b) \in \mathbb N_0
\]
the {\it length} of $b$. Let $M$ be a (multiplicatively written) monoid with identity element $1 := 1_M$. We denote by $M^{\times}$ the group of units of $M$ and by $\mathsf q (M)$ the quotient group of $M$. The monoid $M$ is called {\it reduced} if $M^{\times} = \{1\}$, while $M$ is called {\it torsion-free} if $\mathsf q (M)$ is a torsion-free abelian group. The \emph{rank} of $M$ is the dimension of the $\qq$-vector space $\qq \otimes_\zz \mathsf{q}(M)$. The monoid $M_{\red} := M/M^{\times}$ is called the \emph{reduced} monoid of~$M$. For a nonempty set of indices $I$, the \emph{coproduct} of a family of monoids $(M_i)_{i \in I}$ is
\[
	\coprod_{i \in I}M_i := \big\{ (b_i)_{i \in I} : b_i=1 \ \text{for almost all} \ i \in I \big\}, 
\]
which is a submonoid of $\prod_{i \in I} M_i$ and, therefore, a monoid. If each $M_i$ is reduced, then the coproduct of $(M_i)_{i \in I}$ is also reduced. A submonoid $S \subseteq M$ is said to be {\it divisor-closed} if for all $b \in S$ and $c \in M$ the divisibility relation $c \mid_M b$ implies that $c \in S$.

For a subset $S$ of $M$, we let $\langle S \rangle$ denote the submonoid of $M$ generated by $S$. The monoid $M$ is called \emph{finitely generated} provided that $M = \langle S \rangle$ for a finite subset $S$ of $M$. A subset $I \subseteq M$ is called an $s$-\emph{ideal} if $IM = I$. The monoid $M$ is $s$-\emph{noetherian} if every ascending chain of $s$-ideals stabilizes. A reduced monoid is finitely generated if and only if it is $s$-noetherian (\cite[Theorem 3.6]{HK98}). For a subset $S$ of $\mathsf{q}(M)$, we set $S^{-1} := \{x \in \mathsf{q}(M) : xS \subseteq M \}$ and $S_v := (S^{-1})^{-1}$. An $s$-ideal $I$ of $M$ is called a \emph{v-ideal} if $I_v = I$.\footnote{$v$-ideals are also called divisorial ideals.} We say that $M$ is $v$-{\it noetherian} (or a {\it Mori monoid}) if every ascending chain of $v$-ideals stabilizes.  An $s$-ideal of the form $bM$ for some $b \in M$ is called \emph{principal}, and $M$ is said to satisfy the \emph{ascending chain condition on principal ideals} (ACCP) if every ascending chain of principal ideals stabilizes.

Two classes of monoids will play a crucial role in this paper: these are numerical monoids and Puiseux monoids. A {\it numerical monoid} is a submonoid $M$ of $(\nn_0, +)$ such that the complement $\nn_0 \setminus M$ of $M$ in $\nn_0$ is finite. For a numerical monoid $M$, we denote by $\mathsf{f}(M) = \max (\nn_0 \setminus M)$  the {\it Frobenius number} of $M$. One can readily check that every numerical monoid is reduced and finitely generated and so $s$-noetherian. A {\it Puiseux monoid} is a submonoid of $(\mathbb Q_{\ge 0}, +)$. We say that a Puiseux monoid $M$ is \emph{nontrivial} if $M \ne \{0\}$. Every numerical monoid is clearly a Puiseux monoid. On the other hand, every finitely generated Puiseux monoid is isomorphic to a numerical monoid. Furthermore, every monoid that is not a group and whose quotient group is a rank-one torsion-free abelian group is isomorphic to a Puiseux monoid (\cite[Theorem 3.12]{Ge-Go-Tr21}).

\medskip
\subsection{Factorizations} 

A non-unit $a \in M$ is called an \emph{atom} if the equality $a = bc$ for some $b,c \in M$ ensures that $b \in M^\times$ or $c \in M^\times$. We denote by $\mathcal A (M)$ the set of atoms of $M$. Then we let $\mathsf Z (M)$ denote the free commutative monoid on the set $\mathcal A (M_{\red})$, and the formal products in $\mathsf{Z}(M)$ are called \emph{factorizations}. Since $\mathsf{Z}(M)$ is a free commutative monoid, there exists a unique monoid homomorphism $\pi \colon \mathsf Z (M) \to M_{\red}$ such that $\pi(a) = a$ for all $a \in \mathcal{A}(M_{\text{red}})$. For an element $b \in M$, we set
\[
	\mathsf Z (b) := \pi^{-1} (bM^\times).
\]
If a factorization $z$ in $\mathsf{Z}(M)$ is the formal product of $\ell$ atoms of $\mathsf{Z}(M)$ (counting repetitions), then we say that the \emph{length} of $z$ is $\ell$ and we set $|z| := \ell$. For each $b \in M$, the subset
\[
	\mathsf L(b) := \mathsf L_M (b) := \{ |z| : z \in \mathsf Z (b) \}
\]
of $\mathbb N_0$ consisting of all the factorization lengths of $b$ is called the \emph{length set} of $b$. For each $b \in M$, note that $\mathsf L (b) = \{0\}$ if and only if  $b \in M^{\times}$, while $1 \in \mathsf L (b)$ if and only if $\mathsf L (b) = \{1\}$, which happens precisely when~$b$ is an atom. We say that an element $b \in M$ has {\it unique factorization} if $|\mathsf Z (b)| = 1$. If $S$ is a divisor-closed submonoid of $M$, then $\mathsf L_S (b) = \mathsf L_M (b)$ for every $b \in S$. The set
\[
	\mathcal L (M) := \{\mathsf L (b) : b \in M \}
\]
is called the {\it system of length sets} of $M$. We say that $M$ is
\begin{itemize}
	\item {\it factorial} if every $b \in M$ has unique factorization;
	\smallskip
	
	\item a BF-{\it monoid} (or a \emph{bounded factorization monoid}) if $\mathsf L (b)$ is finite and nonempty for all $b \in M$;
	\smallskip
	
	\item {\it atomic} if $\mathsf L (b)$ is nonempty for all $b \in M$.
\end{itemize}
We summarize some basic properties which will be used without further mention. Every Mori monoid (whence every Krull monoid, every Krull domain, and every Noetherian domain) is a BF-monoid \cite[Theorem~2.2.9]{Ge-HK06a}, but not every BF-domain is a Mori domain \cite[Example~4.7]{Co-Go25a}. In addition, every BF-monoid satisfies the ACCP \cite[Corollary~1.3.3]{Ge-HK06a}, but not every integral domain that satisfies the ACCP is a BF-domain \cite[Example~2.1]{AAZ90}. Finally, every monoid that satisfies the ACCP is atomic, but not every atomic domain satisfies the ACCP \cite[Theorem~1.3]{aG74}.

\medskip
\subsection{Some Preliminary Results} 

We continue with two simple lemmas which, most probably, are well known. We provide short proofs here for the convenience of the readers.

\begin{lemma} \label{lem:atoms of the union}
	Let $M$ be a monoid. For a nonempty set of indices $I$, let  $(M_i)_{i \in I}$ be a family of submonoids of $M$ such that $M = \bigcup_{i \in I} M_i$. If $M_i^{\times} = M^{\times} \cap M_i$ for all $i \in I$, then $\mathcal{A}(M) \subseteq \bigcup_{i \in I} \mathcal{A}(M_i)$.
\end{lemma}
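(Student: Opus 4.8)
The plan is to fix an arbitrary atom $a \in \mathcal{A}(M)$, use the covering hypothesis to find an index $i \in I$ with $a \in M_i$, and then show that $a$ is in fact an atom of $M_i$; this immediately gives $a \in \bigcup_{i \in I} \mathcal{A}(M_i)$, which is the desired inclusion.

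First I would check that $a$ is a non-unit of $M_i$. Since $a \in \mathcal{A}(M)$, we have $a \notin M^{\times}$. As $a \in M_i$, the hypothesis $M_i^{\times} = M^{\times} \cap M_i$ then yields $a \notin M_i^{\times}$, so $a$ is a non-unit of $M_i$. Next, take any factorization $a = bc$ with $b, c \in M_i$. Regarding this equality inside $M$ (recall $M_i \subseteq M$) and using that $a$ is an atom of $M$, we conclude that $b \in M^{\times}$ or $c \in M^{\times}$; say $b \in M^{\times}$. Then $b \in M^{\times} \cap M_i = M_i^{\times}$, so $b$ is a unit of $M_i$. Hence every factorization of $a$ in $M_i$ is trivial, i.e. $a \in \mathcal{A}(M_i)$.

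There is essentially no obstacle here: the content of the argument is the single hypothesis $M_i^{\times} = M^{\times} \cap M_i$, which is exactly what is needed to transport unithood between $M_i$ and $M$ in both directions — the inclusion $M_i^{\times} \subseteq M^{\times} \cap M_i$ lets us detect in $M$ that a factor of $a$ in $M_i$ is a unit, and the reverse inclusion $M^{\times} \cap M_i \subseteq M_i^{\times}$ both guarantees $a$ is still a non-unit of $M_i$ and promotes that detected unit back to a unit of $M_i$. (Without this condition the statement fails, since $M_i$ could have fewer units and thereby fail to see $a$ as irreducible.)
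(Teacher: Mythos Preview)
Your proof is correct and follows essentially the same route as the paper: pick $a\in\mathcal A(M)$, locate it in some $M_i$, and use $M_i^{\times}=M^{\times}\cap M_i$ to see that any factorization $a=bc$ in $M_i$ is trivial. The only difference is that you make the non-unit check for $a$ in $M_i$ explicit, which the paper leaves implicit.
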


\begin{proof}
	Let $a \in \mathcal A (M)$. Then there is $i \in I$ with $a \in M_i$. If $b, c \in M_i$ with $a = bc$, then $b \in M^{\times}$ or $c \in M^{\times}$. Thus, $b \in M_i \cap M^{\times} = M_i^{\times}$ or $c \in M_i \cap M^{\times} = M_i^{\times}$, whence $a \in \mathcal A (M_i)$.
\end{proof}

\smallskip
\begin{lemma} \label{2.2}
	Let $M$ be a monoid satisfying the {\rm ACCP}.
	\begin{enumerate}
		\item If $S \subseteq M$ is a  submonoid with $S^{\times} = M^{\times} \cap S$, then $S$ satisfies the {\rm ACCP}.
		\smallskip
		
		\item If $(M_i)_{i \in I}$ is a family of monoids satisfying the {\rm ACCP}, then their coproduct satisfies the {\rm ACCP}.
	\end{enumerate}
\end{lemma}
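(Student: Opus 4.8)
The plan is to treat the two parts separately, and in each case to lift a given ascending chain of principal ideals to a monoid where the ACCP is already available, and then transport the resulting stabilization back.

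For part (1), I would begin with an ascending chain $b_0 S \subseteq b_1 S \subseteq \cdots$ of principal ideals of $S$. Since $1 \in S$, each inclusion gives $b_n \in b_{n+1} S \subseteq b_{n+1} M$, so that $b_0 M \subseteq b_1 M \subseteq \cdots$ is an ascending chain of principal ideals of $M$; by hypothesis it stabilizes, say $b_n M = b_N M$ for all $n \ge N$. Now fix $n \ge N$. From $b_N S \subseteq b_n S$ we get $b_N = b_n t$ for some $t \in S$, and from $b_n M = b_N M$ we get $b_n = b_N v$ for some $v \in M$; substituting yields $b_N = b_N v t$, so cancellativity forces $v t = 1$, whence $t \in M^\times \cap S = S^\times$. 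Then $b_n = b_N t^{-1} \in b_N S$, so $b_n S = b_N S$. The hypothesis $S^\times = M^\times \cap S$ is used exactly to force the connecting element $t$ to be a unit of $S$, not merely of $M$.

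For part (2), write $M = \coprod_{i \in I} M_i$. Since a monoid satisfies the ACCP if and only if its associated reduced monoid does, and the reduced monoid of a coproduct is the coproduct of the reduced monoids, I may assume every $M_i$, and hence $M$, is reduced. Take an ascending chain $x^{(0)} M \subseteq x^{(1)} M \subseteq \cdots$ with $x^{(n)} = (x^{(n)}_i)_{i \in I}$. Comparing $i$-th components shows $x^{(n+1)}_i \mid_{M_i} x^{(n)}_i$ for all $i \in I$ and all $n$; in particular $x^{(n)}_i = 1$ implies $x^{(n+1)}_i = 1$, so the finite supports $T_n := \{ i \in I : x^{(n)}_i \ne 1 \}$ form a descending chain. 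A descending chain of finite sets stabilizes, so $T_n = T_{N_0} =: T$ for some $N_0$ and all $n \ge N_0$, with $T$ finite. For each $i \in T$ the ACCP in $M_i$ gives $N_i$ with $x^{(n)}_i = x^{(N_i)}_i$ for $n \ge N_i$ (equality of principal ideals being equality of elements, as $M_i$ is reduced). Putting $N := \max\big( \{N_0\} \cup \{ N_i : i \in T \} \big)$, a maximum over a finite set, we obtain $x^{(n)}_i = x^{(N)}_i$ for every $i \in I$ and every $n \ge N$ (for $i \in T$ by the choice of $N_i$, and for $i \notin T$ because both coordinates equal $1$), hence $x^{(n)} = x^{(N)}$ and $x^{(n)} M = x^{(N)} M$.

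The one genuinely delicate point is the step just used in part (2): because $I$ may be infinite, one cannot take a supremum of the stabilization thresholds of the individual factors, so the argument must first reduce to finitely many coordinates, which works because the relevant supports are finite and nonincreasing. I expect this to be the main --- and fairly mild --- obstacle; everything else is routine divisibility bookkeeping. (If one prefers not to pass to reduced monoids, the identical argument runs with $T_n := \{ i \in I : x^{(n)}_i \notin M_i^\times \}$, using that a divisor of a unit is a unit.)
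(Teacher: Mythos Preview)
Your proof is correct and follows essentially the same approach as the paper: in part~(1) you lift the chain from $S$ to $M$, stabilize there, and use $S^{\times}=M^{\times}\cap S$ to pull the stabilization back; in part~(2) you reduce to finitely many coordinate chains and apply the ACCP in each. The only cosmetic difference is that in part~(2) you pass to reduced monoids and track the descending finite supports explicitly, whereas the paper simply observes that the coordinates $i$ with $b_{i,1}=1_{M_i}$ already yield trivial chains, so only the finitely many remaining coordinates need to stabilize.
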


\begin{proof}
	(1) Let $(b_nS)_{n \ge 1}$ be an ascending chain of principal ideals of $S$ with $b_n \in S$ for all $n \ge 1$. Then $(b_nM)_{n \ge 1}$ is an ascending chain of principal ideals of $M$, whence it stabilizes and so we can assume that $b_nM = b_mM$ for some $m \in \mathbb N$ and all $n \ge m$. Fix $n \in \nn$ with $n \ge m$, and write $b_m = b_n u$ for some $u \in M^{\times}$. Since $b_mS \subseteq b_nS$, it follows that $u \in M^{\times} \cap S = S^{\times}$, whence $b_mS = b_n S$.
	\smallskip
	
	(2) Let $\big( (b_{i,n}M_i)_{i \in I} \big)_{n\ge 1}$ be an ascending chain of principal ideals in the coproduct $\coprod_{i \in I} M_i$. Then $(b_{i,n}M_i)_{n \ge 1}$ is an ascending chain of principal ideals in $M_i$ for all $i \in I$. Since only finitely many $b_{i,1}$ are distinct from $1_{M_i}$, there are only finitely many nontrivial chains. Thus, the chain $\big( (b_{i,n}M_i)_{i \in I} \big)_{n \ge 1}$ stabilizes in the coproduct.
\end{proof}

Let $M_1, M_2$ be monoids satisfying the ACCP. Then part~(2) of Lemma~\ref{2.2} implies that their direct product $M_1 \times M_2$ also satisfies the ACCP. However, if $M$ is a monoid containing $M_1$ and $M_2$ as submonoids with $M = M_1M_2$, then $M$ does not necessarily satisfy the ACCP (see Example \ref{3.4}). We will obtain a positive result in Section~\ref{sec:overatomicity} under some additional assumptions (Proposition~\ref{prop:ACCP}). We start with some preparation, which consists of the following two lemmas.

\smallskip
\begin{lemma}\label{2.3}
	Let $N$ be an $s$-noetherian monoid, and let $(s_n)_{n \ge 1}$ be a sequence of elements of $N$. Then there exists a map $\varphi:\mathbb{N}\rightarrow\mathbb{N}$ such that $\varphi(n+1)>\varphi(n)$ and $s_{\varphi(n+1)}N\subseteq s_{\varphi(n)}N$ for every $n \in \mathbb{N}$.
\end{lemma}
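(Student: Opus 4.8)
The plan is to extract, from the given sequence $(s_n)_{n\ge 1}$, a subsequence along which the principal ideals form a descending chain, using the $s$-noetherian hypothesis repeatedly. First I would observe that it suffices to produce, for any index $m$, an index $m' > m$ with $s_{m'} N \subseteq s_m N$; once this is available, the map $\varphi$ is built by recursion: set $\varphi(1) := 1$ and $\varphi(n+1) := \varphi(n)'$, so that $\varphi$ is strictly increasing and $s_{\varphi(n+1)} N \subseteq s_{\varphi(n)} N$ for all $n$, exactly as required.

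So the crux is the following claim: for each $m \ge 1$ there is some $m' > m$ with $s_m \mid_N s_{m'}$ (equivalently $s_{m'}N \subseteq s_m N$). To see this, consider the ascending chain of $s$-ideals
\[
	I_k := \bigcup_{j \ge k} s_j N \qquad (k \ge 1),
\]
which is indeed an $s$-ideal since it is a union of the principal ideals $s_j N$, and which is ascending because $I_1 \supseteq I_2 \supseteq \cdots$ — wait, rather $I_k$ decreases in $k$, so I would instead argue directly. A cleaner route: fix $m$ and consider the ascending chain of $s$-ideals $J_k := s_m N + s_{m+1} N + \dots + s_{m+k} N$ for $k \ge 0$. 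By the $s$-noetherian property this chain stabilizes, say $J_k = J_K$ for all $k \ge K$. Hmm, but stabilization of $J_k$ does not by itself force $s_m$ to divide a later term.

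The correct argument uses the $s$-noetherian property in the form that every $s$-ideal is contained in a maximal one, or more simply: among all sums $s_{i_1}N + \dots + s_{i_r}N$ with $i_1 < \dots < i_r$ and $i_1 = m$, pick one, say $I$, that is maximal with respect to inclusion (such a maximal element exists because $N$ is $s$-noetherian, hence every nonempty family of $s$-ideals has a maximal element). Now take any index $j$ with $j > \max\{i_1, \dots, i_r\}$; by maximality of $I$ we must have $s_j N \subseteq I$, hence in particular $s_j \in I = s_{i_1}N + \dots + s_{i_r}N$, so $s_j = s_{i_1}x_1 + \dots + s_{i_r}x_r$ for some $x_1, \dots, x_r \in N$... but in a general (multiplicatively written, but here additive) monoid membership in the $s$-ideal $s_{i_1}N + \dots + s_{i_r}N = s_{i_1}N \cup \dots \cup s_{i_r}N$ means $s_j \in s_{i_t}N$ for some $t$, i.e. $s_{i_t} \mid_N s_j$. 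Since we can choose $j$ as large as we please while $i_t \le \max\{i_1,\dots,i_r\} < j$, this is not yet $s_m \mid s_j$ unless $t = 1$.

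To force divisibility by $s_m = s_{i_1}$ itself, I would instead run the maximality argument on the family of $s$-ideals of the form $s_m N + \sum_{j \in F} s_j N$ where $F$ ranges over finite subsets of $\{m+1, m+2, \dots\}$, and among these choose one, call it $I = s_m N + \sum_{j\in F_0} s_j N$, whose complement-intersection behaves well — actually the clean statement is: the family $\{\,s_m N + \sum_{j \in F} s_j N : F \subseteq \nn_{\ge m+1} \text{ finite}\,\}$ has a maximal element $I$, and for any $j_0 \notin F_0$ with $j_0 > m$ we get $I + s_{j_0} N = I$ by maximality, so $s_{j_0} \in I$; picking $j_0$ large enough that $j_0 > \max F_0$, and noting that the $s$-ideal $I$ is the union $s_m N \cup \bigcup_{j \in F_0} s_j N$, we conclude $s_{j_0} \in s_m N$ or $s_{j_0} \in s_j N$ for some $j \in F_0$. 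In the second case $s_j \mid_N s_{j_0}$ with $j \in F_0$; replace $F_0$ by $F_0 \setminus \{j\} \cup \{j_0\}$? That changes $I$. The main obstacle, and the point I expect to spend the most care on, is precisely this: packaging the $s$-noetherian condition so that it yields a \emph{fixed} divisor $s_m$ dividing infinitely many later terms, rather than merely some term dividing some later term. I expect the resolution is to apply the previous paragraph's weaker conclusion to get an infinite chain $s_{n_1} \mid s_{n_2} \mid s_{n_3} \mid \cdots$ (with $n_1$ arbitrary and the $n_k$ strictly increasing) first, and then \emph{re-index}: this already gives the desired $\varphi$ with $\varphi(k) = n_k$ after relabeling, since $s_{n_{k+1}} N \subseteq s_{n_k} N$. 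And the infinite chain is built by iterating the "some term divides some later term" statement, which in turn follows from: in an $s$-noetherian monoid, any sequence $(t_n)$ has indices $i < j$ with $t_i \mid_N t_j$ — proved by taking a maximal element of $\{t_1 N, t_1 N + t_2 N, t_1 N + t_2 N + t_3 N, \dots\}$, say $t_1 N + \dots + t_K N$, and then $t_{K+1} N \subseteq t_1 N + \dots + t_K N = t_1 N \cup \dots \cup t_K N$ forces $t_i \mid_N t_{K+1}$ for some $i \le K$. Applying this to tails of $(s_n)$ repeatedly and diagonalizing produces the strictly increasing $\varphi$ with $s_{\varphi(n+1)} N \subseteq s_{\varphi(n)} N$ for all $n$, completing the proof.
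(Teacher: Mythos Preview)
Your proposal lands on the right elementary observation --- in an $s$-noetherian monoid, any sequence $(t_n)$ has indices $i<j$ with $t_j N\subseteq t_i N$, proved via stabilization of $t_1N\cup\cdots\cup t_KN$ --- and that part is fine. The gap is the last sentence. ``Applying this to tails of $(s_n)$ repeatedly and diagonalizing'' does not, as written, produce a chain: applying the observation to the tail $(s_m,s_{m+1},\dots)$ yields some $i<j$ (both $\ge m$) with $s_i\mid_N s_j$, but nothing links this pair to any previously chosen pair, so naive iteration builds no descending chain of principal ideals. You already discovered, correctly, that one cannot force $i=m$; e.g.\ in $(\nn_0,+)$ with $s_1=5$ and $s_n=3$ for $n\ge 2$, no $m'>1$ satisfies $s_1\mid_{\nn_0} s_{m'}$. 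So your recursion ``$\varphi(1):=1$, $\varphi(n+1):=\varphi(n)'$\,'' cannot start.

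What is missing is a pigeonhole upgrade of your observation: from $\bigcup_{n\ge 1}s_nN=\bigcup_{i=1}^m s_iN$ one gets not merely one good pair but an index $i\le m$ for which \emph{infinitely many} $k$ satisfy $s_kN\subseteq s_iN$. This is precisely the paper's route: it sets $\Omega=\{\ell:\text{infinitely many }k>\ell\text{ have }s_kN\subseteq s_\ell N\}$, shows $\Omega\ne\emptyset$ by this pigeonhole, and then --- the key recursive step your sketch lacks --- shows each $x\in\Omega$ is followed by some $y\in\Omega$ with $y>x$ and $s_yN\subseteq s_xN$, by rerunning the same pigeonhole on the infinite set $\{k>x:s_kN\subseteq s_xN\}$. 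Equivalently, your observation says divisibility is a well-quasi-order on the sequence, and passing from ``every sequence has a good pair'' to ``every sequence has an infinite chain'' needs the standard bad-index argument (only finitely many $n$ can fail to divide some later term; past them, recursively pick successors). Once you insert either that argument or the $\Omega$-pigeonhole, the proof is complete; the word ``diagonalizing'' alone does not carry it.
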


\begin{proof}
	We assert that the set
	\[
		\Omega := \{\ell \in \mathbb{N} : \{k \in \mathbb{N}_{>\ell} : s_k N \subseteq s_{\ell} N \} \ \text{ is infinite} \}
	\]
	is nonempty.  Since $(\bigcup_{i=1}^r s_iN)_{r \ge 1}$ is an ascending chain of $s$-ideals, there is some $m \in \mathbb{N}$ such that $\bigcup_{n\in\mathbb{N}} s_nN=\bigcup_{i=1}^m s_iN$. Consequently, there is some $j \in \ldb 1,m \rdb$ such that $\{k \in \mathbb{N} : s_k N \subseteq s_j N \}$ is infinite. Therefore $\{k \in \mathbb{N}_{>j} : s_kN\subseteq s_jN\}$ is infinite, whence $j \in \Omega$. Thus, $\Omega$ is nonempty.
	
	Now fix $x \in \Omega$. Next we assert that there is some $y\in\Omega$ such that $y>x$ and $s_yN\subseteq s_xN$. Since  $\Sigma = \{k \in \mathbb{N}_{>x} : s_k N \subseteq s_x N \}$ is infinite, there is some injective map $\psi \colon \mathbb{N} \rightarrow \Sigma$. As $(\bigcup_{i=1}^r s_{\psi(i)}N)_{r \ge 1}$ is an ascending chain of $s$-ideals, there is some $m \in \mathbb{N}$ such that $\bigcup_{n \in \mathbb{N}} s_{\psi(n)}N = \bigcup_{i=1}^m s_{\psi(i)}N$. We infer that there is some $j \in \ldb 1,m \rdb$ such that $\{k \in \mathbb{N} : s_{\psi(k)} N \subseteq s_{\psi(j)} N \}$ is infinite. Set $y = \psi(j)$. Since $\psi$ is injective, we obtain that $\{k \in \mathbb{N}_{>y} : s_k N \subseteq s_y N \}$ is infinite. Therefore $y \in \Omega$. Since $y = \psi(j) \in \Sigma$, we see that $y>x$ and $s_y N \subseteq s_xN$.
	
	Let $f \colon \Omega \rightarrow \Omega$ be defined by $f(x) = \min\{y \in \Omega : y>x \ \text{and} \ s_y N \subseteq s_x N \}$ for each $x \in \Omega$. Then we recursively define a map $\varphi \colon \mathbb{N} \rightarrow \Omega$ by setting $\varphi(1) = \min \Omega$ and $\varphi(n+1) = f(\varphi(n))$ for each $n \in \mathbb{N}$. Hence we obtain a map $\varphi \colon \mathbb{N} \rightarrow \mathbb{N}$ such that $\varphi(n+1) > \varphi(n)$ and $s_{\varphi(n+1)} N \subseteq s_{\varphi(n)} N$ for every $n \in \mathbb{N}$.
\end{proof}

\smallskip
\begin{lemma}\label{2.4}
	Let $M$ be a monoid, and let $M_0$ and $N$ be submonoids of $M$ such that $N$ is $s$-noetherian and $M = M_0 N$. Then $M$ satisfies the {\rm ACCP} if and only if  each ascending chain $(r_n M)_{n \ge 1}$ stabilizes when $r_n \in M_0$ for every $n \in \nn$.
\end{lemma}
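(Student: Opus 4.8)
The plan is to prove the forward implication trivially and concentrate on the converse. So suppose every ascending chain $(r_n M)_{n \ge 1}$ with all $r_n \in M_0$ stabilizes, and let $(b_n M)_{n \ge 1}$ be an arbitrary ascending chain of principal ideals of $M$. Using $M = M_0 N$, write $b_n = r_n s_n$ with $r_n \in M_0$ and $s_n \in N$ for each $n \in \mathbb N$. The key conceptual move is to first tame the $N$-components: apply Lemma~\ref{2.3} to the sequence $(s_n)_{n \ge 1}$ in the $s$-noetherian monoid $N$ to obtain a strictly increasing map $\varphi \colon \mathbb N \to \mathbb N$ with $s_{\varphi(n+1)} N \subseteq s_{\varphi(n)} N$ for all $n$, and then pass to the subsequence indexed by $\varphi$.

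Next I would show that $(r_{\varphi(n)} M)_{n \ge 1}$ is an ascending chain. Since the original chain is ascending and $\varphi$ is increasing, $b_{\varphi(n)} \in b_{\varphi(n+1)} M$, say $b_{\varphi(n)} = b_{\varphi(n+1)} c_n$ with $c_n \in M$; and from $s_{\varphi(n+1)} N \subseteq s_{\varphi(n)} N$ we get $s_{\varphi(n+1)} = s_{\varphi(n)} t_n$ for some $t_n \in N$. Substituting $b_{\varphi(k)} = r_{\varphi(k)} s_{\varphi(k)}$ and cancelling the common factor $s_{\varphi(n)}$ (cancellativity of $M$) yields $r_{\varphi(n)} = r_{\varphi(n+1)} (t_n c_n)$, so $r_{\varphi(n)} M \subseteq r_{\varphi(n+1)} M$. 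By hypothesis this chain of principal ideals generated by elements of $M_0$ stabilizes: there is $m \in \mathbb N$ with $r_{\varphi(n)} M = r_{\varphi(m)} M$ for all $n \ge m$, and a short cancellation argument (as in the proof of Lemma~\ref{2.2}(1)) upgrades this to $r_{\varphi(n)} \in r_{\varphi(m)} M^{\times}$ for all $n \ge m$.

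Finally, for $n \ge m$, iterating $s_{\varphi(k+1)} N \subseteq s_{\varphi(k)} N$ gives $s_{\varphi(n)} = s_{\varphi(m)} w_n$ for some $w_n \in N$, so $b_{\varphi(n)} = r_{\varphi(n)} s_{\varphi(n)} \in b_{\varphi(m)} M$; combined with the ascending hypothesis this forces $b_{\varphi(n)} M = b_{\varphi(m)} M$ for all $n \ge m$. Since $\varphi$ is strictly increasing and $(b_n M)_{n \ge 1}$ is ascending, any index $k \ge \varphi(m)$ satisfies $\varphi(m) \le k \le \varphi(n)$ for some $n \ge m$, whence $b_{\varphi(m)} M \subseteq b_k M \subseteq b_{\varphi(n)} M = b_{\varphi(m)} M$; thus the full chain stabilizes and $M$ satisfies the ACCP. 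I expect the only real subtlety to be the repeated, careful use of cancellativity to convert divisibility relations among the $b_{\varphi(n)}$ into divisibility relations among the $r_{\varphi(n)}$ — everything else is routine subsequence bookkeeping, and the decisive step is invoking Lemma~\ref{2.3} at the outset to line the $N$-parts up into a single divisibility chain.
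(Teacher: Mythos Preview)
Your proposal is correct and follows essentially the same route as the paper: both invoke Lemma~\ref{2.3} to extract a subsequence along which the $N$-components form a descending divisibility chain, then use cancellativity to deduce that the $M_0$-components form an ascending chain of principal ideals of $M$, apply the hypothesis to stabilize that chain, and finish by sandwiching indices. The only cosmetic differences are that the paper writes the key divisibility $r_{\varphi(n)} \in r_{\varphi(n+1)} M$ as a single quotient computation in $\mathsf q(M)$ rather than via explicit elements $t_n, c_n$, and that your ``upgrade to $r_{\varphi(n)} \in r_{\varphi(m)} M^{\times}$'' is not actually needed (equality of the principal ideals already suffices for the final step).
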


\begin{proof}
	Obviously, it suffices to verify the reverse implication. Let $(x_nM)_{n \ge 1}$ be an ascending chain of principal ideals in $M$. Take a sequence $(r_n)_{n \ge 1}$ of elements of $M_0$ and a sequence $(s_n)_{n \ge 1}$ of elements of~$N$ such that $x_n = r_n s_n$ for every $n \in \nn$. By virtue of Lemma~\ref{2.3}, we can take a map $\varphi \colon \mathbb{N} \rightarrow \mathbb{N}$ such that $\varphi(n+1) > \varphi(n)$ and $s_{\varphi(n+1)}N \subseteq s_{\varphi(n)} N$ for every $n \in \mathbb{N}$. For each $n \in \mathbb{N}$,
	\[
		r_{\varphi(n)} = \frac{x_{\varphi(n)}}{s_{\varphi(n)}} = r_{\varphi(n+1)}\frac{x_{\varphi(n)}s_{\varphi(n+1)}}{x_{\varphi(n+1)}s_{\varphi(n)}} \in r_{\varphi(n+1)}M,
	\]
	whence $r_{\varphi(n)} M \subseteq r_{\varphi(n+1)} M$. Consequently, there is some $\ell \in \mathbb{N}$ such that $r_{\varphi(k)}M = r_{\varphi(\ell)}M$ for every $k \in \mathbb{N}_{\geq \ell}$. If $k \in \mathbb{N}_{\geq\ell}$, then $s_{\varphi(k)}N\subseteq s_{\varphi(\ell)}N$, and so
	\[
		x_{\varphi(k)}M=r_{\varphi(k)}s_{\varphi(k)}M=r_{\varphi(\ell)}s_{\varphi(k)}M\subseteq r_{\varphi(\ell)}s_{\varphi(\ell)}M=x_{\varphi(\ell)}M\subseteq x_{\varphi(k)}M.
	\]
	After setting $m := \varphi(\ell)$, we infer that $x_{\varphi(k)}M = x_mM$ for every $k \ge \ell$ and, therefore, $x_kM = x_mM$ for every $k \geq m$.
\end{proof}

\bigskip
\section{A Realization Result for a Single Set $L \subseteq \nn_{\ge 2}$}  \label{sec:overatomicity}

In this section, we construct, for each nonempty subset $L \subseteq \mathbb N_{\ge 2}$, a Puiseux monoid $M$ that satisfies the ACCP and has $L$ as a length set (Theorem \ref{3.5}). The monoid $M$ will be constructed as the union of an ascending sequence $(M_n)_{n \ge 1}$ of finitely generated Puiseux monoids. The starting point of this construction is established by Propositions \ref{prop:prime reciprocal} and \ref{realization-finite-set}.

We start with  Puiseux monoids of the form $\big\langle \frac1p : p \in P \big\rangle$ for some infinite subset $P \subseteq \mathbb N$.  This class of monoids   pops up at various places in the literature, oftentimes in the special case where $P$ is the set of primes (see \cite[Example~2.1]{AAZ90} or \cite[Proposition~4.2]{Go22a}). The first three properties in Proposition~\ref{prop:prime reciprocal} are known; they were first established in \cite[Proposition~3.5 and Theorem~4.5]{AGH21}. For convenience of the reader, we include their short proofs here. Moreover, we also completely determine the full system of length sets of the corresponding monoids, and this is the first class of monoids satisfying the ACCP but not the bounded factorization property for which this is done (the Puiseux monoids in \cite[Theorem~3.3]{CGG20a} do not satisfy the ACCP). Even in the case of BF-monoids, the full system of length sets has so far been completely determined only in very special cases (see \cite{Ge-Sc-Zh17b} for an overview of results).

\smallskip
\begin{prop} \label{prop:prime reciprocal}
	Let $P \subseteq \mathbb N_{\ge 2}$ be an infinite set of pairwise relatively prime positive integers, and let~$M_P$ be the Puiseux monoid generated by the set $\big\{ \frac1p : p \in P \big\}$. Then the following statements hold.
	\begin{enumerate}
		\item  $\mathcal{A}(M_P) = \big\{ \frac1p : p \in P \big\}$.
		\smallskip
		
		\item  Every  $q \in M_P$ can be uniquely written in the form
	     \begin{equation} \label{eq:canonical dec of prime reciprocal}
			  q = N(q) + \sum_{p \in P} c_p \frac1p,
		 \end{equation}
		 for some $N(q) \in \nn_0$ and coefficients $c_p \in \ldb 0, p - 1 \rdb$ for every $p \in P$ such that almost all $c_p$ are equal to zero. In addition, for any $q, q' \in M_P$ the following statements hold:
		  \begin{enumerate}
			   \item $N(q) \le N(q')$ if $q \mid_{M_P} q'$, and
			   \smallskip
							
			   \item $q - N(q) \in M_P$ has  unique factorization.
		  \end{enumerate}
		  \smallskip
		
		  \item $M_P$ satisfies the {\rm ACCP}.
	      \smallskip
		
		  \item $\mathcal{L}(M_P) = \{m + n P : m,n \in \nn_0\}$.
	\end{enumerate}
\end{prop}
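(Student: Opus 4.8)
The plan is to read off $\mathcal L(M_P)$ directly from the canonical decomposition~\eqref{eq:canonical dec of prime reciprocal} supplied by part~(2). By part~(1) the monoid $M_P$ is atomic with $\mathcal A(M_P)=\{1/p:p\in P\}$, and since $M_P\subseteq\qq_{\ge 0}$ is reduced, a factorization of an element $q\in M_P$ is the same thing as a family $(a_p)_{p\in P}$ of nonnegative integers, almost all equal to $0$, with $\sum_{p\in P}a_p/p=q$; its length is $\sum_{p\in P}a_p$. (Recall also that in the statement $nP$ denotes the $n$-fold sumset $P+\dots+P$, with $0P=\{0\}$, and $m+nP=\{m\}+nP$.) So the whole point is to pin down, for each $q=N(q)+\sum_{p\in P}c_p/p$, which tuples $(a_p)_p$ can occur.

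The main step is to show that every factorization $(a_p)_p$ of $q$ satisfies $a_p\equiv c_p\pmod p$ for all $p\in P$. To see this, fix $p\in P$, choose a finite set $S\subseteq P$ containing $p$ and all indices at which $(a_{p'})$ or $(c_{p'})$ is nonzero, and multiply the identity $\sum_{p'\in S}a_{p'}/p'=N(q)+\sum_{p'\in S}c_{p'}/p'$ by $\prod_{p'\in S}p'$ to obtain an equality of integers; reducing it modulo $p$ and using that the elements of $P$ are pairwise relatively prime, every term indexed by $p'\ne p$ drops out, as does $N(q)\prod_{p'\in S}p'$, while $\prod_{p'\in S\setminus\{p\}}p'$ is a unit modulo $p$, which leaves $a_p\equiv c_p\pmod p$. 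Since $a_p\in\nn_0$ and $c_p\in\ldb 0,p-1\rdb$, this forces $a_p=c_p+p\,e_p$ for some $e_p\in\nn_0$, almost all of which are $0$; substituting back and using $\sum_{p\in P}c_p/p=q-N(q)$ yields $\sum_{p\in P}e_p=N(q)$, and the length of the factorization equals $\sum_{p\in P}c_p+\sum_{p\in P}p\,e_p$. I expect this modular computation to be the crux of the proof — everything else is bookkeeping once pairwise coprimality has been used.

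Conversely, for any family $(e_p)_p$ of nonnegative integers, almost all zero, with $\sum_{p\in P}e_p=N(q)$, putting $a_p:=c_p+p\,e_p$ gives a genuine factorization of $q$. Such families correspond exactly to multisets of $N(q)$ elements of $P$, and for each of them $\sum_{p\in P}p\,e_p$ runs precisely over the $N(q)$-fold sumset of $P$; hence
\[
	\textstyle \mathsf L(q)=\big(\sum_{p\in P}c_p\big)+N(q)\,P .
\]
By part~(2)(b) the number $\sum_{p\in P}c_p$ is the length of the (unique) factorization of $q-N(q)$, so it lies in $\nn_0$; therefore $\mathsf L(q)\in\{m+nP:m,n\in\nn_0\}$, and since $q$ was arbitrary, $\mathcal L(M_P)\subseteq\{m+nP:m,n\in\nn_0\}$.

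For the reverse inclusion, given $m,n\in\nn_0$ I would use that $P$ is infinite to choose pairwise distinct $p_1,\dots,p_m\in P$ and set $b:=n+\sum_{i=1}^m 1/p_i\in M_P$. Since $1\in\ldb 0,p_i-1\rdb$ for each $i$ (as $p_i\ge 2$), this expression is already the canonical decomposition~\eqref{eq:canonical dec of prime reciprocal} of $b$; by uniqueness we get $N(b)=n$ and $\sum_{p\in P}c_p(b)=m$, so the displayed formula gives $\mathsf L(b)=m+nP$. Thus every set $m+nP$ with $m,n\in\nn_0$ occurs as a length set of $M_P$, which yields the reverse inclusion and completes the proof.
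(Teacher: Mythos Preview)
Your proof of part~(4) is correct and follows essentially the same route as the paper's. The only cosmetic difference is in how you extract the relation $a_p\equiv c_p\pmod p$: you clear denominators and reduce modulo~$p$, whereas the paper performs Euclidean division $a_p=m_pp+r_p$ and then invokes the uniqueness of the canonical decomposition from part~(2) to identify $r_p=c_p$ and $\sum_p m_p=N(q)$; both arguments use pairwise coprimality in the same way and lead to the identical formula $\mathsf L(q)=\big(\sum_p c_p\big)+N(q)\,P$, and your realization of $m+nP$ via $b=n+\sum_{i=1}^m 1/p_i$ matches the paper's construction (with $m$ and $n$ playing the correct roles).
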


\begin{proof}
	We set $A := \big\{ \frac1p : p \in P \big\}$.

	(1) Since $M_P$ is reduced, it suffices to verify that $A$ is a minimal generating set of $M_P$ (see \cite[Proposition~1.1.7]{Ge-HK06a}). Assume to the contrary that there is $p_0 \in P$ such that $A \setminus \{p_0\}$ is a generating set. Then there are $p_1, \ldots, p_n \in P \setminus \{p_0\}$ and $c_1, \ldots, c_n \in \mathbb N$ such that
	\[
		\frac{1}{p_0} = c_1 \frac{1}{p_1} + \dots + c_n\frac{1}{p_n}.
	\]
	If $m = p_1  \cdots  p_n$, then
	\[
		\frac{m}{p_0} = c_1 \frac{m}{p_1} + \dots + c_n\frac{m}{p_n} \in \mathbb N,
	\]
	whence $p_0 \mid m$, a contradiction.
	\smallskip
	
	(2) Since $A$ is a generating set, every $q \in M_P$ can be written in the indicated form. The uniqueness follows from an argument as in part~(1). Then statements (a) and (b) are clear.
	\smallskip

	(3) Let $(q_n + M_P)_{n \ge 1}$ be an ascending chain of principal ideals of $M_P$. It follows (a) in part~(2) that the sequence $(N(q_n))_{n \ge 1}$ must stabilize, say $N(q_n) = N(q_m)$ for some $m \in \mathbb N$ and all $n \ge m$. Thus, by (b) in part~(2), the element $q_n- N(q_n)$ has unique  factorization for every $n \ge m$. This immediately implies that the ascending chain $(q_n + M_P)_{n \ge 1}$ must stabilize.
	\smallskip

	(4) We proceed in two steps.
	\smallskip
	
	\noindent (i) Take $q \in M_P$ and write $q = N(q) + \sum_{p \in P} c_p \frac1p$ as in \eqref{eq:canonical dec of prime reciprocal}. We claim that
	\begin{equation} \label{eq:set of length formula for PR}
		\mathsf{L}(q) = s_q + N(q) P,
	\end{equation}
	where $s_q := \sum_{p \in P} c_p$. To verify that $\mathsf{L}(q) \subseteq s_q + N(q) P$, suppose that $z = \sum_{p \in P} c'_p \frac1p \in \mathsf Z (q) \subseteq \mathsf Z (M_P)$ is a factorization of~$q$ with all $c_p' \in \mathbb N_0$ and almost all of them equal to zero. For each $p \in P$, we can write $c'_p = m_p p + r_p$ for some $m_p, r_p \in \nn_0$ with $r_p \in \ldb 0, p-1 \rdb$. Then
	\[
		q = \sum_{p \in P} c'_p \frac1p = \sum_{p \in P} m_p + \sum_{p \in P} r_p \frac1p
	\]
	and, by the uniqueness of the decomposition in~\eqref{eq:canonical dec of prime reciprocal}, it follows that $N(q) = \sum_{p \in P} m_p$ and $c_p = r_p$ for every $p \in P$, which implies that $\sum_{p \in P} r_p = s_q$. Thus, we obtain that
	\[
		|z| = \sum_{p \in P} c'_p = \sum_{p \in P} r_p + \sum_{p \in P} m_p p \in s_q +  N(q) P,
	\]
	and hence the inclusion $\mathsf{L}(q) \subseteq s_q + N(q) P$ holds. For the reverse inclusion, take $\ell \in s_q + N(q) P$ and write $\ell = s_q + (p_1 + \dots + p_{N(q)})$ for some $p_1, \dots, p_{N(q)} \in P$. Now consider the factorization
	\[
		w = \sum_{p \in P} (t_p p + c_p) \frac1p \ \in \mathsf Z (M_P),
	\]
	where $t_p = |\{i \in \ldb 1, N(q) \rdb : p_i = p\}|$ for every $p \in P$. Since
	\[
		\sum_{p \in P} (t_p p + c_p) \frac1p = \sum_{p \in P} t_p + \sum_{p \in P} c_p \frac1p = N(q) + \sum_{p \in P} c_p \frac1p = q,
	\]
	it follows that $w$ is a factorization of $q$. In addition,
	\[
		|w| = \sum_{p \in P}(c_p + t_p p) = \sum_{p \in P} c_p +  \sum_{p \in P} t_pp = s_q + \sum_{i=1}^{N(q)} p_i = \ell,
	\]
	whence $\ell \in \mathsf{L}(q)$. Thus, $\mathsf{L}(q) = s_q + N(q) P$, which implies that $\mathcal{L}(M_P) \subseteq \{m + n P : m,n \in \nn_0\}$.
	\smallskip
	
	\noindent (ii) For the reverse inclusion, let $m,n \in \nn_0$ be given. Now take $p_1, \ldots, p_n \in P$ such that $p_1 < \dots < p_n$, and consider the rational number
	\[
		q := m + \sum_{i=1}^n \frac1{p_i} \ \in M_P.
	\]
	Since $m + \sum_{i=1}^n \frac1{p_i}$ is the unique decomposition of $q$ satisfying the restrictions of~\eqref{eq:canonical dec of prime reciprocal}, it follows from~\eqref{eq:set of length formula for PR} that $m + n P = \mathsf{L}(q) \in \mathcal{L}(M_P)$. As a consequence, the inclusion $ \{m + n P : m,n \in \nn_0\} \subseteq \mathcal{L}(M_P)$ also holds, which completes our argument.
\end{proof}

The following proposition is also needed to argue the primary result of this section.

\begin{prop} \label{realization-finite-set}
For every finite, nonempty subset $L \subseteq \mathbb N_{\ge 2}$, there exist a numerical monoid $M$ and a squarefree element $m \in M$ such that $\mathsf L_M (m) = L$.
\end{prop}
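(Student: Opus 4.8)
The plan is to realize $L$ as a length set in a numerical monoid by engineering the generators so that a single squarefree element has exactly the prescribed factorization lengths. Write $L = \{\ell_1, \ldots, \ell_k\}$ with $2 \le \ell_1 < \cdots < \ell_k$. The idea is to pick a large prime $p$ (or, more flexibly, a large integer $N$ with enough room), and construct generators $a_1, \ldots, a_r$ of a numerical monoid $M$ together with a target element $m$ so that every factorization of $m$ into atoms uses each generator at most once (this is what forces $m$ to be squarefree, i.e., a product of distinct atoms in the multiplicative language, or a sum of distinct generators additively) and so that the possible total numbers of generators used are precisely $\ell_1, \ldots, \ell_k$. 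Concretely, I would partition a set of $r$ ``slots'' into blocks and choose the numerical values of the generators so that a subset $S$ of the generators sums to $m$ if and only if $|S| \in L$; the squarefree condition then says we never need to repeat a generator.

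First I would set up the combinatorial skeleton: take disjoint index sets and assign to the generators weights that are near-equal (say all close to some large value $w$) but perturbed by small, carefully chosen integers so that a subset of size $\ell$ can hit the target $m := $ (something like $w \cdot \ell_k + (\text{correction})$) exactly when $\ell \in L$, and so that no multiset with a repeated generator and total multiplicity outside the admissible range can sum to $m$. A clean way to do this: let $m$ be chosen so that any representation $m = \sum c_i a_i$ with $c_i \in \mathbb{N}_0$ forces $\sum c_i \in \{\ell_1, \ldots, \ell_k\}$ by a pigeonhole/size argument (the $a_i$ are all within a narrow window $[w, w + t]$ for small $t$, so $\sum c_i$ is pinned down to within a small interval once $m \approx w \cdot \ell$), and then, within each admissible length $\ell = \ell_j$, arrange that there is at least one subset realizing it and that no repetition is needed (e.g., by having $r$ large relative to $\ell_k$ and the perturbations generic enough that each feasible length is witnessed by a genuine subset). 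Finally, one checks the chosen generating set is minimal, so the $a_i$ are atoms, and verifies $\mathbb{N}_0 \setminus M$ is finite (automatic once $\gcd$ of generators is $1$, which one arranges by including, say, a generator equal to some value coprime to the rest, or by noting $\gcd(a_i) = 1$ follows from the perturbation choices).

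The main obstacle I expect is the simultaneous control of two competing requirements: forcing \emph{squarefreeness} (no atom repeats in any factorization of $m$) while still allowing \emph{all} of $\ell_1, \ldots, \ell_k$ as lengths and \emph{excluding} every other length. Repetition is naturally avoided when atoms are large and $m$ is not too large, but then long factorizations become hard to fit; conversely, many short and long factorizations coexist only if $m$ is large, which risks admitting repeated atoms or extra lengths. The resolution is the ``narrow window'' trick: choosing all generators in an interval $[w, w(1+\varepsilon)]$ makes the length $\sum c_i$ of any factorization of a number near $w\ell_k$ essentially determined by the number itself, so by picking $m$ in the right spot one kills all lengths outside a controlled band; then one enlarges $r$ and tunes the $O(1)$ perturbations so that exactly the elements of $L$ in that band are hit, each by a repetition-free subset. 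I would present the construction with explicit (if not optimized) formulas for $w$, the perturbations, $m$, and $r$ in terms of $\ell_1, \ldots, \ell_k$, then verify (i) minimality of the generating set, (ii) $\mathsf{L}_M(m) = L$, and (iii) squarefreeness of $m$, in that order.
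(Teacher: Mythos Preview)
The paper does not prove this proposition itself; it simply cites \cite[Theorem~3.3]{Ge-Sc18e} (Geroldinger--Schmid, Forum Math.\ 2018). Your plan is therefore an attempt at an independent construction, and it has a genuine obstruction at its core.

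If every generator lies in $[w, w(1+\varepsilon)]$, then any length-$\ell$ factorization of $m$ forces $m \in [\ell w,\, \ell w(1+\varepsilon)]$. For $m$ to admit factorizations of both lengths $\ell_1 = \min L$ and $\ell_k = \max L$, these two intervals must overlap, which requires $1+\varepsilon \ge \ell_k/\ell_1$. For a spread-out set such as $L = \{2, 100\}$ this gives $\varepsilon \ge 49$, so the window is not narrow at all; and once it is wide enough to admit lengths $2$ and $100$, your size argument no longer excludes \emph{any} length in $\ldb 2, 100 \rdb$. You then ask ``$O(1)$ perturbations'' of near-equal generators to kill every integer in $\ldb \ell_1, \ell_k \rdb \setminus L$---potentially arbitrarily many of them---but offer no mechanism for this, and small perturbations of a tight cluster of generators do not carry enough arithmetic structure to distinguish the forbidden lengths from the allowed ones. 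The squarefree requirement sharpens the contradiction: a squarefree length-$2$ factorization needs two atoms summing to $m$, while a squarefree length-$\ell_k$ factorization needs $\ell_k$ distinct atoms summing to the same $m$, so the atoms must live at scales differing by a factor of roughly $\ell_k/\ell_1$, directly contradicting the narrow-window hypothesis. In short, the two halves of your plan pull against each other, and the ``narrow window plus generic perturbation'' idea cannot realize sets $L$ with large spread. The Geroldinger--Schmid construction cited by the paper uses an explicit, structured choice of atoms (with exact arithmetic relations, not approximate size bounds, controlling which lengths appear); a self-contained proof would need an organizing idea of that kind rather than a genericity argument.
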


\begin{proof}
	See \cite[Theorem 3.3]{Ge-Sc18e}.
\end{proof}

\smallskip
We consider a finite-dimensional real vector space $V = \mathbb R^d$ with $d \ge 1$ endowed with the following lattice order: if $\boldsymbol{x} = (x_1, \dots, x_d)$ and $\boldsymbol{y} = (y_1, \dots, y_d)$ are vectors in~$V$, then $\boldsymbol x \le \boldsymbol y$ if $x_i \le y_i$ for all $i \in \ldb 1,d \rdb$. Then $V^+ = \{ \boldsymbol x \in V : \boldsymbol x \ge 0 \} = \mathbb R_{\ge 0}^d$. Let $\|\boldsymbol x\| = |x_1| + \dots + |x_d|$ denote the $1$-norm. An additive submonoid $M \subseteq V$ is called {\it positive} if $M \subseteq V^+$.

\smallskip
\begin{prop} \label{prop:ACCP}
	Let  $(V=\mathbb R^d, \|\cdot \|, \le)$ with $d\ge 1$ and with norm and order as above. Let $M \subseteq V$ be a positive submonoid, and let $M_0$ and $N_0$ be  submonoids of $M$ such that $M = M_0 + N_0$. If $M_0$ satisfies the {\rm ACCP} and $N_0$ is $s$-noetherian, then $M$ satisfies the {\rm ACCP}.
\end{prop}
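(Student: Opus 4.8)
The plan is to reduce to the criterion of Lemma~\ref{2.4} and then play off the geometry of the positive cone $V^+ = \mathbb{R}_{\ge 0}^d$ against the finite generation of $N_0$. First I would record two easy preliminary facts. Since $M \subseteq V^+$, the monoid $M$ is reduced (a vector in $\mathbb{R}_{\ge 0}^d$ whose additive inverse also lies in $\mathbb{R}_{\ge 0}^d$ must be $0$), and hence so are its submonoids $M_0$ and $N_0$. Being reduced and $s$-noetherian, $N_0$ is finitely generated (\cite[Theorem~3.6]{HK98}), say $N_0 = \langle \boldsymbol w_1, \dots, \boldsymbol w_k \rangle$; writing any nonzero $\boldsymbol w \in N_0$ as a nonnegative integer combination of the $\boldsymbol w_j$ and using that the $1$-norm is additive on $V^+$, one gets $\|\boldsymbol w\| \ge \delta := \min_{1 \le j \le k}\|\boldsymbol w_j\| > 0$. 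This uniform lower bound on the norms of nonzero elements of $N_0$ is the crux of the whole argument.

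Next I would apply Lemma~\ref{2.4} (in additive notation, with $N := N_0$): since $N_0$ is $s$-noetherian and $M = M_0 + N_0$, it suffices to show that every ascending chain $(r_n M)_{n \ge 1}$ of principal ideals of $M$ with all $r_n \in M_0$ stabilizes. Fix such a chain and write $r_n = r_{n+1} + t_n$ with $t_n \in M$. As $M \subseteq V^+$ and the $1$-norm is additive on $V^+$, this gives $\|r_n\| = \|r_{n+1}\| + \|t_n\|$, so $\sum_{n \ge 1}\|t_n\| \le \|r_1\| < \infty$ and in particular $\|t_n\| \to 0$.

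Now I would use $M = M_0 + N_0$ to write $t_n = u_n + v_n$ with $u_n \in M_0$ and $v_n \in N_0$. Positivity forces $\|v_n\| \le \|t_n\| \to 0$, so $\|v_n\| < \delta$ for all sufficiently large $n$, which by the first paragraph means $v_n = 0$. Hence for all large $n$ we have $r_n = r_{n+1} + u_n$ with $u_n \in M_0$, so that $(r_n M_0)_n$ is eventually an ascending chain of principal ideals of $M_0$; since $M_0$ satisfies the {\rm ACCP} it stabilizes, and in the reduced cancellative monoid $M_0$ the equality $r_n M_0 = r_m M_0$ forces $r_n = r_m$. Therefore $(r_n M)_n$ stabilizes, completing the verification demanded by Lemma~\ref{2.4}.

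The only delicate point I foresee is the step turning the analytic conclusion $\|t_n\| \to 0$ into the algebraic conclusion that the $N_0$-component of $t_n$ eventually vanishes, and this hinges precisely on finite generation of $N_0$ bounding the norms of its nonzero elements away from $0$. Everything else is a routine reduction, with Lemma~\ref{2.4} carrying the burden of isolating the $M_0$-part of an arbitrary principal-ideal chain; I do not anticipate a genuine obstacle here.
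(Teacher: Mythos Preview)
Your proposal is correct and follows essentially the same route as the paper's proof: reduce via Lemma~\ref{2.4} to chains with $r_n \in M_0$, use finite generation of $N_0$ to bound $\|\boldsymbol w\|$ away from $0$ for nonzero $\boldsymbol w \in N_0$, show the successive differences have $1$-norm tending to $0$, and conclude that their $N_0$-component eventually vanishes so that the ACCP of $M_0$ finishes the job. Your telescoping-sum argument $\sum_n \|t_n\| \le \|r_1\|$ is a slightly slicker variant of the paper's convergence computation, and you should handle the trivial case $N_0 = \{0\}$ separately (your $\delta$ is a minimum over an empty set there), but otherwise the two proofs coincide.
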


\begin{proof}
	If $N_0 =\{0\}$, then $M=M_0$ satisfies the ACCP. Suppose that $N_0 \ne \{0\}$. Since $N_0$ is reduced and $s$-noetherian,  $\mathcal{A}(N_0)$ is finite and nonempty. We set $\alpha := \min \{ \|a\| : a \in \mathcal{A}(N_0)\}$ and observe that $\alpha = \min \{ \|b\| : b \in N_0 \setminus \{0\} \} >0$.
	
	Let $(r_n)_{n \ge 1}$ be a sequence of elements of $M_0$ such that $r_n + M \subseteq r_{n+1} + M$ for each $n \in \mathbb{N}$. Observe that $r_n - r_{n+1} \in M \subseteq V^+$ and so $r_n \geq r_{n+1} \geq 0$ for every $n \in \mathbb{N}$. Therefore the sequence $(\|r_n\|)_{n \ge 1}$ is monotonously decreasing and bounded from below, and thus it is convergent.

	\smallskip
	For every $n \in \mathbb N$, we set $r_n := (r_n^{(1)}, \ldots, r_n^{(d)})$. Since $r_n \ge r_{n+1}$, it follows that
	\[
		r_n^{(i)} \ge r_{n+1}^{(i)} \quad \text{for every} \quad i \in \ldb 1,d \rdb.
	\]
	Since $(\|r_n\|)_{n \ge 1}$ is convergent, there is some $\beta \in \mathbb R_{\ge 0}$ such that
	\[
		\lim_{n \to \infty} \|r_n\| = \lim_{n \to \infty} (r_n^{(1)} + \dots +  r_n^{(d)}) = \beta.
	\]
	This implies that
	\[
		\begin{aligned}
			\lim_{n \to \infty} \| r_n-r_{n+1} \| & = \lim_{n \to \infty} \big( (r_n^{(1)}-r_{n+1}^{(1)})  + \dots +  (r_n^{(d)}- r_{n+1}^{(d)}) \big) \\
			& = \lim_{n \to \infty} \big(  (r_n^{(1)} + \dots +  r_n^{(d)}) -  (r_{n+1}^{(1)} + \dots +  r_{n+1}^{(d)}) \big) \\
			 & = \lim_{n \to \infty} (r_n^{(1)} + \dots +  r_n^{(d)}) - \lim_{n \to \infty} (r_{n+1}^{(1)} + \dots +  r_{n+1}^{(d)}) = 0.
		\end{aligned}
	\]
	Thus, there is some $\ell\in\mathbb{N}$ such that $\| r_n-r_{n+1} \| < \alpha $ for each $n\in\mathbb{N}_{\geq\ell}$. Because $M = M_0 + N_0$, for each $n \in \mathbb{N}_{\geq\ell}$ we can pick some $x_n \in M_0$ and $y_n \in N_0$ such that $r_n - r_{n+1} = x_n + y_n$. Now for each $n \in \nn_{\ge \ell}$, the fact that $y_n\in N_0$, together with
	\[
		0 \leq \|y_n \| = \| r_n-r_{n+1}-x_n \| \leq \|r_n-r_{n+1}\| < \alpha,
	\]
	guarantees that $y_n=0$, and so $r_n = r_{n+1} + x_n$. This implies that $r_n + M_0 \subseteq r_{n+1} + M_0$ for every $n \in \mathbb{N}_{\geq\ell}$, and thus we can take $m \in \mathbb{N}_{\geq\ell}$ such that $r_n+M_0 = r_m+M_0$ for each $n\in\mathbb{N}_{\geq m}$. Consequently, $r_n+M=r_m+M$ for each $n\in\mathbb{N}_{\geq m}$. It follows now from Lemma~\ref{2.4} that $M$ satisfies the ACCP.
\end{proof}

The condition that $N_0$ is $s$-noetherian in Proposition~\ref{prop:ACCP} is crucial. The following example illustrates this observation.

\begin{example} \label{3.4}
	Consider the Puiseux monoids $M_0 := \{0\} \cup \qq_{\ge 1}$ and $N_0 = \big\langle \frac1p : p \in \pp \big\rangle$. Since $0$ is not a limit point of $M_0^\bullet$, it follows from \cite[Proposition~4.5]{fG19} that $M_0$ is a BF-monoid and, therefore, it satisfies the ACCP. On the other hand, it follows from part~(3) of Proposition~\ref{prop:prime reciprocal} that $N_0$ also satisfies the ACCP. However, let us argue that the monoid  $M := M_0 + N_0$ is not even atomic, and so it cannot satisfy the ACCP. Since $\min M_0^{\bullet} = 1$, none of the non-zero elements of $M_0$ divides in $M$ any of the elements~$\frac1p$ with $p \in \pp$. Therefore it follows from part~(1) of Proposition~\ref{prop:prime reciprocal} that $\big\{ \frac1p : p \in \pp \big\} \subseteq \mathcal{A}(M)$. In addition, no element of $\qq_{\ge 1}$ can be an atom of $M$: indeed, $1 \notin \mathcal{A}(M)$ because $1$ is the sum of $p$ copies of the atom~$\frac1p$ for any $p \in \pp$, while any $q \in \qq$ with $q > 1$ can be written as $q = \frac1p + \big(q - \frac1p \big)$ in $M$ for some $p \in \pp$ provided that $p$ is sufficiently large so that $\frac1p < q-1$. Hence $\mathcal{A}(M) = \big\{ \frac1p : p \in \pp \big\}$. Now we can see that~$M$ is not atomic as, for instance, the element $\frac54$ cannot be written as a sum of atoms in $M$.
\end{example}

The next lemma is the last crucial tool we need in order to establish the main result of this section.

\begin{lemma} \label{lem:auxiliary}
Let $M_0$ be a Puiseux monoid satisfying the {\rm ACCP} with $1 \in M_0 \setminus \mathcal{A}(M_0)$, and let $N$ be a numerical monoid containing a prime $p$ such that
\[	
	p > \max \mathcal{A}(N) \quad \text{and} \quad  \mathsf v_p(M_0^\bullet) \subseteq \nn_0.
\]
Then the Puiseux monoid $M:= M_0 + p^{-1}N$ has the following properties.
\begin{enumerate}
	\item[(1)] $M$ satisfies the {\rm ACCP} with $\mathcal{A}(M) = \mathcal{A}(M_0) \cup \mathcal{A}\big(p^{-1} N \big)$.
	\smallskip
			
	\item[(2)] $\mathsf{L}_M(1) = \mathsf{L}_{M_0}(1) \cup \mathsf{L}_N(p)$.
\end{enumerate}
\end{lemma}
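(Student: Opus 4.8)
The plan is to dispatch the two items in turn, with one valuation observation doing most of the work. Set $N_0 := p^{-1}N$, so that $M = M_0 + N_0$ and $M$ is a positive submonoid of $V = \mathbb{R}$ (take $d = 1$ with the usual order). The map $n \mapsto n/p$ is a monoid isomorphism $N \to N_0$, so $N_0$ is finitely generated, hence $s$-noetherian, and $\mathcal{A}(N_0) = \{a/p : a \in \mathcal{A}(N)\}$. With these observations, the ACCP assertion in (1) follows at once from Proposition~\ref{prop:ACCP} applied to $M = M_0 + N_0$, since $M_0$ satisfies the ACCP and $N_0$ is $s$-noetherian.

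The technical heart is the following dichotomy, which I would establish next: \emph{for every $m \in M^{\bullet}$ one has $\mathsf{v}_p(m) \ge -1$, and $m \in M_0$ if and only if $\mathsf{v}_p(m) \ge 0$.} Indeed, writing $m = b + n/p$ with $b \in M_0$ and $n \in N$ and using $\mathsf{v}_p(b) \ge 0$ (when $b \ne 0$, by hypothesis) together with $\mathsf{v}_p(n) \ge 0$, the ultrametric inequality gives $\mathsf{v}_p(pb + n) \ge 0$, that is $\mathsf{v}_p(m) \ge -1$; and if $\mathsf{v}_p(m) \ge 0$ then $\mathsf{v}_p(n) = \mathsf{v}_p(p(m - b)) \ge 1$, so $p \mid n$ in $\mathbb{Z}$, whence $n/p \in \mathbb{N}_0$ and $m = b + (n/p)\cdot 1 \in M_0$ because $1 \in M_0$. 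The converse inclusion is precisely the hypothesis $\mathsf{v}_p(M_0^{\bullet}) \subseteq \mathbb{N}_0$. I would also record that $\mathsf{v}_p(a/p) = -1$ for each $a \in \mathcal{A}(N)$, since $0 < a \le \max\mathcal{A}(N) < p$; in particular $\mathcal{A}(M_0)$ and $\mathcal{A}(N_0)$ are disjoint.

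For the atom formula in (1): since $M$ is reduced, any $a \in \mathcal{A}(M)$ written as $a = b + n/p$ (with $b \in M_0$, $n \in N$) must have $b = 0$ or $n = 0$, and a one-line argument then places $a$ in $\mathcal{A}(N_0)$ or $\mathcal{A}(M_0)$; this gives $\mathcal{A}(M) \subseteq \mathcal{A}(M_0) \cup \mathcal{A}(N_0)$. For the reverse inclusion, take $a \in \mathcal{A}(M_0) \cup \mathcal{A}(N_0)$, suppose $a = x + y$ with $x, y \in M^{\bullet}$, and split cases on whether $\mathsf{v}_p(x)$ and $\mathsf{v}_p(y)$ equal $-1$. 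If both are $\ge 0$, then $x, y \in M_0$ by the dichotomy, contradicting $a \in \mathcal{A}(M_0)$ (and this case cannot occur when $a \in \mathcal{A}(N_0)$, as then $\mathsf{v}_p(a) = -1$). If exactly one of them is $-1$, the ultrametric inequality (an equality, since the two valuations differ) forces $\mathsf{v}_p(a) = -1$, excluding $a \in \mathcal{A}(M_0)$; and when $a = a'/p \in \mathcal{A}(N_0)$, writing $x = b_x + n_x/p$ with $p \nmid n_x$ and $y \in M_0$, the rational $(a' - n_x)/p = b_x + y$ is a nonzero element of $M_0$ with $p$-adic valuation $-1$ (because $1 \le a' - n_x < p$), a contradiction. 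Finally, if $\mathsf{v}_p(x) = \mathsf{v}_p(y) = -1$, write $x = b_x + n_x/p$ and $y = b_y + n_y/p$ with $p \nmid n_x, n_y$ (so $n_x, n_y \ge 1$) and compare $a = (b_x + b_y) + (n_x + n_y)/p$ with the constraints: in the $\mathcal{A}(M_0)$ case this forces $p \mid (n_x + n_y)$, so $a$ admits a nontrivial decomposition inside $M_0$ unless it collapses to $a = 1$, which is excluded by $1 \notin \mathcal{A}(M_0)$; in the $\mathcal{A}(N_0)$ case it forces $n_x + n_y \le a'$, so either $a' = n_x + n_y$ with $n_x, n_y \in N^{\bullet}$, impossible since $a' \in \mathcal{A}(N)$, or $(a' - n_x - n_y)/p$ is again a nonzero element of $M_0$ with $p$-adic valuation $-1$, a contradiction. \textbf{This last case analysis is the main obstacle:} the representation $m = b + n/p$ is not unique, so the whole argument must be run through the intrinsic valuation $\mathsf{v}_p$, and the three hypotheses $\mathsf{v}_p(M_0^{\bullet}) \subseteq \mathbb{N}_0$, $p > \max\mathcal{A}(N)$, and $1 \notin \mathcal{A}(M_0)$ get used in concert exactly to close the mixed case and the case where both valuations equal $-1$.

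For item (2): since $\mathcal{A}(M) = \mathcal{A}(M_0) \sqcup \mathcal{A}(N_0)$, every $z \in \mathsf{Z}_M(1)$ factors as $z = z_0 z_1$ with $z_0$ a product of atoms of $M_0$ and $z_1$ a product of atoms of $N_0$, giving a relation $1 = s + t$ with $s = \pi(z_0) \in M_0$, $t = \pi(z_1) \in N_0$, and $|z| = |z_0| + |z_1|$. Writing $t = u/p$ with $u \in N$, positivity gives $u \le p$, and the dichotomy applied to $s = (p - u)/p$ rules out $0 < u < p$ (it would make $s$ a nonzero element of $M_0$ with $\mathsf{v}_p(s) = -1$); hence $u = 0$, so $z = z_0 \in \mathsf{Z}_{M_0}(1)$, or $u = p$, so $z = z_1$ and rescaling by $p$ turns $z_1$ into an element of $\mathsf{Z}_N(p)$ of the same length. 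This proves $\mathsf{L}_M(1) \subseteq \mathsf{L}_{M_0}(1) \cup \mathsf{L}_N(p)$. The reverse inclusion is immediate: $\mathcal{A}(M_0) \subseteq \mathcal{A}(M)$ makes any factorization of $1$ in $M_0$ a factorization of $1$ in $M$, and a factorization $p = a_1 \cdots a_\ell$ in $N$ gives the length-$\ell$ factorization $1 = (a_1/p)\cdots(a_\ell/p)$ of $1$ in $M$.
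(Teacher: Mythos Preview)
Your proof is correct. The ACCP part via Proposition~\ref{prop:ACCP} and the argument for item~(2) match the paper almost verbatim (the paper also splits an atom-sum for $1$ into its $M_0$-part and its $N_0$-part and shows via the $p$-adic valuation that one part must vanish).

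The one genuine difference is how you handle the reverse inclusion $\mathcal{A}(M_0)\cup\mathcal{A}(N_0)\subseteq\mathcal{A}(M)$. You run a four-way case split on the pair $(\mathsf v_p(x),\mathsf v_p(y))$, chasing each case through an explicit (non-unique) representation $x=b_x+n_x/p$. The paper avoids this entirely: having already shown $M$ is atomic and $\mathcal{A}(M)\subseteq\mathcal{A}(M_0)\cup\mathcal{A}(N_0)$ (the latter via the ``atoms are the minimal generating set'' fact, rather than your direct argument), it expands any decomposition $a_0=b_0+c_0$ into a single sum
\[
a_0=\sum_{a\in\mathcal{A}(M_0)}c_a\,a+\sum_{a\in\mathcal{A}(N_0)}d_a\,a,
\]
observes once that the $N_0$-part is a non-negative integer (same valuation trick as your dichotomy), and then uses $1\notin\mathcal{A}(M_0)$ to force that integer to be $0$; the $N_0$ case is symmetric. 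This collapses your Cases~2 and~3 into one line and never needs to distinguish how many of $x,y$ have valuation $-1$. What your route buys is self-containment: you never invoke atomicity of $M$ or the minimal-generating-set lemma for the atom formula, so the two inclusions are logically independent. What the paper's route buys is brevity and a cleaner handling of the ``main obstacle'' you flagged, precisely because working with atom-sums rather than arbitrary pairs $(x,y)$ makes the non-uniqueness of $m=b+n/p$ irrelevant.
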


\begin{proof}
	Clearly,  $N_0 := p^{-1} N$ is isomorphic to $N$, whence   $N_0$ is a finitely generated Puiseux monoid with $1 \in N_0$. Since $p > \max \mathcal{A}(N)$, we infer that  $\max \mathcal A (N_0) < 1$.
	\smallskip
	
	(1) By Proposition \ref{prop:ACCP},  the monoid $M = M_0 + N_0$ satisfies the ACCP, whence it is atomic.  Since both $M_0$ and $N_0$ are atomic monoids, $\mathcal{A}(M_0) \cup \mathcal{A}(N_0)$ is a generating set of $M$. Since in a reduced monoid the set of atoms is the smallest generating set (see \cite[Proposition 1.1.7]{Ge-HK06a}), it follows that $\mathcal{A}(M) \subseteq \mathcal{A}(M_0) \cup \mathcal{A}(N_0)$. For the reverse inclusion, take $a_0 \in \mathcal{A}(M_0) \cup \mathcal{A}(N_0)$ and write $a_0=b_0+c_0$ with $b_0, c_0 \in M$. Since $\mathcal{A}(M_0) \cup \mathcal{A}(N_0)$ is a generating set of $M$, we obtain a representation
	\begin{equation} \label{eq:aux}
		a_0 = \sum_{a \in \mathcal{A}(M_0)} c_a a + \sum_{a \in \mathcal{A}(N_0)} d_a a
	\end{equation}
	with non-negative integers $c_a, d_a$ and almost all $c_a$ and $d_a$ equal to zero.

	Assume first that $a_0 \in \mathcal{A}(M_0)$. Since $\mathsf v_p(M_0^\bullet) \subseteq \nn_0$, we infer that  $m := \sum_{a \in \mathcal{A}(N_0)} d_a a \in \nn_0$. Now as $1 \in M_0 \setminus \mathcal{A}(M_0)$, it follows that $m=0$. Thus, based on~\eqref{eq:aux} and the fact that $a_0 \in \mathcal{A}(M_0)$, we see obtain that $c_{a_0} = 1$ and so $c_a = 0$ for all $a \in \mathcal{A}(M_0) \setminus \{a_0\}$. Thus, $\mathcal{A}(M_0) \subseteq \mathcal{A}(M)$.

	Assume now that $a_0 \in \mathcal{A}(N_0)$. Again, since $\mathsf v_p(M_0^\bullet) \subseteq \nn_0$, we see that $n := a_0 - \sum_{a \in \mathcal{A}(N_0)} d_a a \in \nn_0$. Thus, from the inequality $a_0 \le \max \mathcal A (N_0) < 1$, we infer that $n=0$, whence $a_0 =  \sum_{a \in \mathcal{A}(N_0)} d_a a$. This implies that $c_a = 0$ for all $a \in \mathcal{A}(M_0)$. Now, since~$N_0$ is atomic, $d_{a_0} = 1$ and, therefore, $d_a = 0$ for every $a \in \mathcal{A}(N_0) \setminus \{a_0\}$, whence $a_0 \in \mathcal{A}(M)$.
	\smallskip
	
	(2) Observe that $\mathsf{L}_{M_0}(1) \cup \mathsf{L}_N(p)  = \mathsf{L}_{M_0}(1) \cup \mathsf{L}_{N_0}(1) \subseteq \mathsf{L}_{M}(1)$, where the last inclusion holds because $\mathcal{A}(M_0) \cup \mathcal{A}(N_0) \subseteq \mathcal{A}(M)$.  To verify the reverse inclusion, take $\ell \in \mathsf{L}_M(1)$ and write
	\begin{equation} \label{eq:aux I}
		1 = \sum_{a \in \mathcal{A}(M_0)} c_a a + \sum_{a \in \mathcal{A}(N_0)} d_a a,
	\end{equation}
	where all $c_a, d_a \in \mathbb N_0$ and almost all of them equal to zero, such that
	\[
		\ell = \sum_{a \in \mathcal{A}(M_0)} c_a + \sum_{a \in \mathcal{A}(N_0)} d_a.
	\]
	After applying the $p$-adic valuation map on both sides of the equality~\eqref{eq:aux I}, as we did in part~(1) with the equality~\eqref{eq:aux}, we obtain that $d := 1 - \sum_{a \in \mathcal{A}(N_0)} d_a a \in \{0,1\}$. If $d=0$, then we can deduce from~\eqref{eq:aux I} that $c_a = 0$ for all $a \in \mathcal{A}(M_0)$, whence $\ell = \sum_{a \in \mathcal{A}(N_0)} d_a \in \mathsf{L}_{N_0}(1) = \mathsf{L}_N(p)$. If $d=1$, then we see that $d_a = 0$ for every $a \in \mathcal{A}(N_0)$, whence $\ell = \sum_{a \in \mathcal{A}(M_0)} c_a \in \mathsf{L}_{M_0}(1)$.
\end{proof}

We are in a position to prove the main result of this section.

\begin{theorem} \label{3.5}
	For every nonempty subset $L \subseteq \nn_{\ge 2}$, there exists a Puiseux monoid~$M$ satisfying the {\rm ACCP} such that $1 \in M$ and $\mathsf{L}_M(1) = L$.
\end{theorem}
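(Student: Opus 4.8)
\emph{Proof idea.} The plan is to treat separately the cases in which $L$ is finite and in which $L$ is infinite, the finite case being immediate. If $L$ is finite, Proposition~\ref{realization-finite-set} provides a numerical monoid $N$ and an element $m \in N$ with $\mathsf{L}_N(m) = L$; then $M := m^{-1}N \subseteq \qq_{\ge 0}$ is a finitely generated Puiseux monoid, hence isomorphic to a numerical monoid, hence a BF-monoid, so it satisfies the ACCP. Since $1 = m/m \in M$ and the scaling isomorphism $M \to N$ sends $1$ to $m$, we get $\mathsf{L}_M(1) = \mathsf{L}_N(m) = L$.

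Assume now that $L$ is infinite, fix an enumeration $L = \{\ell_1, \ell_2, \ldots\}$ by pairwise distinct integers $\ge 2$, and put $L_n := \{\ell_1, \dots, \ell_n\}$, so $L = \bigcup_{n \ge 1} L_n$. I would construct an ascending chain $M_1 \subseteq M_2 \subseteq \cdots$ of finitely generated Puiseux monoids, each satisfying the ACCP, with $1 \in M_n \setminus \mathcal{A}(M_n)$ and $\mathsf{L}_{M_n}(1) = L_n$, and then take $M := \bigcup_{n \ge 1} M_n$. As base case set $M_1 := \langle 1/\ell_1 \rangle$: then $1 = \ell_1 \cdot (1/\ell_1) \notin \mathcal{A}(M_1)$ because $\ell_1 \ge 2$, $M_1$ is finitely generated (so it satisfies the ACCP), and $\mathsf{L}_{M_1}(1) = \{\ell_1\}$. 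For the inductive step, given $M_{n-1}$, only finitely many primes divide the denominators of the elements of $M_{n-1}$; I would choose a prime $p_n$ avoiding those and large enough that $p_n \ge \ell_n(\ell_n+1)$, and set $a := \lfloor p_n / \ell_n \rfloor$ (so $a \ge 2$) and $N_n := \langle a, a+1 \rangle$. Writing $p_n = \ell_n a + s$ with $1 \le s \le \ell_n - 1$ (valid since $p_n$ is a prime larger than $\ell_n$), one checks that $p_n = (\ell_n - s)a + s(a+1) \in N_n$, that $\lceil p_n/(a+1) \rceil = \lfloor p_n/a \rfloor = \ell_n$ and hence $\mathsf{L}_{N_n}(p_n) = \{\ell_n\}$, and that $p_n > a + 1 = \max \mathcal{A}(N_n)$. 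The choice of $p_n$ gives $\mathsf v_{p_n}(M_{n-1}^\bullet) \subseteq \nn_0$, so Lemma~\ref{lem:auxiliary} applies with $M_0 = M_{n-1}$ and $N = N_n$: the Puiseux monoid $M_n := M_{n-1} + p_n^{-1}N_n$ satisfies the ACCP, $\mathcal{A}(M_n) = \mathcal{A}(M_{n-1}) \cup \mathcal{A}(p_n^{-1}N_n)$, and $\mathsf{L}_{M_n}(1) = \mathsf{L}_{M_{n-1}}(1) \cup \mathsf{L}_{N_n}(p_n) = L_{n-1} \cup \{\ell_n\} = L_n$. Since $\mathcal{A}(p_n^{-1}N_n) \subseteq (0,1)$, we still have $1 \notin \mathcal{A}(M_n)$, so the recursion continues. (Alternatively one could realize a larger finite slice of $L$ at each step by combining Proposition~\ref{realization-finite-set} with a rescaling turning its squarefree distinguished element into a prime; the choice above keeps the $N_n$ fully explicit.)

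Next I would put $M := \bigcup_{n \ge 1} M_n$, a Puiseux monoid containing $1$. Since the $M_n$ are reduced, Lemma~\ref{lem:atoms of the union} gives $\mathcal{A}(M) \subseteq \bigcup_n \mathcal{A}(M_n)$; conversely each atom of $M_n$ stays an atom of $M$, because any factorization of it in $M$ already lies in some $M_k$ with $k \ge n$ and atoms persist along the chain by part~(1) of Lemma~\ref{lem:auxiliary}. Hence $\mathcal{A}(M) = \bigcup_n \mathcal{A}(M_n)$ with the $\mathcal{A}(M_n)$ increasing, so every factorization of $1$ in $M$ uses atoms from a single $M_n$ and is a factorization of $1$ in that $M_n$; therefore $\mathsf{L}_M(1) = \bigcup_n \mathsf{L}_{M_n}(1) = \bigcup_n L_n = L$. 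It also follows that $M$ is atomic.

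The one substantial point — and the one where I expect the real work to lie — is that $M$ satisfies the ACCP: this is \emph{not} formal, since an ascending union of ACCP monoids need not satisfy the ACCP (compare Example~\ref{3.4}). I would argue as in the proof of part~(3) of Proposition~\ref{prop:prime reciprocal}, exploiting that each $p_n$ is coprime to all denominators occurring in $M_{n-1}$. Define $\mathsf{N} \colon M \to \nn_0$ by $\mathsf{N}(q) := \max\{k \in \nn_0 : q - k \in M\}$ (a finite maximum, since $q - k \ge 0$ forces $k \le q$); directly from the definition, $q \mid_M q'$ implies $\mathsf{N}(q) \le \mathsf{N}(q')$, and $\mathsf{N}\big(q - \mathsf{N}(q)\big) = 0$. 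The crucial claim would be that every \emph{reduced} element $q_0 \in M$ (one with $\mathsf{N}(q_0) = 0$) has a finite length set: grouping the atoms of any factorization of $q_0$ by level and using the coprimality of the $p_n$, one sees that $1/\ell_1$ occurs with multiplicity $< \ell_1$, that no $p_n^{-1}N_n$-atom occurs at a level $n$ with $\mathsf v_{p_n}(q_0) \ge 0$ (otherwise the $N_n$-content of that factorization would be a positive multiple of $p_n$, letting one subtract $1$ from $q_0$ and stay in $M$, against $\mathsf{N}(q_0) = 0$), and that at each of the finitely many levels $n$ with $\mathsf v_{p_n}(q_0) = -1$ the $N_n$-content is pinned to a single bounded value; thus $\mathsf{L}_M(q_0)$ is a finite sumset of finitely many length sets $\mathsf{L}_{N_n}(\,\cdot\,)$. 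Granting this, take an ascending chain $(q_k + M)_{k \ge 1}$ of principal ideals: the sequence $\big(\mathsf{N}(q_k)\big)_k$ is non-increasing, hence eventually equal to some $\mathsf{N}_0$, and for large $k$ the elements $(q_k)_0 := q_k - \mathsf{N}_0 \in M$ are reduced with $(q_{k+1})_0 \mid_M (q_k)_0$. If the chain did not stabilize, then along a subsequence we would get $(q_{k_{i+1}})_0 \mid_M (q_{k_i})_0$ with $(q_{k_i})_0 \ne (q_{k_{i+1}})_0$, whence $\max \mathsf{L}_M\big((q_{k_i})_0\big) \ge \max \mathsf{L}_M\big((q_{k_{i+1}})_0\big) + 1$ for every $i$ — impossible for a sequence of non-negative integers (here the finiteness of $\mathsf{L}_M(q_0)$ plays the role that ``unique factorization of $q - N(q)$'' plays in Proposition~\ref{prop:prime reciprocal}). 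Hence the chain stabilizes, $M$ satisfies the ACCP, and the proof is complete. The technical heart is thus the finiteness of $\mathsf{L}_M(q_0)$ for reduced $q_0$, which is precisely where the arithmetic of the numerical monoids $N_n$ and the careful choice of the primes $p_n$ get used.
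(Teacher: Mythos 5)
Your proof is correct and its overall architecture is the same as the paper's: split into the finite and infinite cases, and in the infinite case build an ascending chain $M_n = M_{n-1} + p_n^{-1}N_n$ via Lemma~\ref{lem:auxiliary}, realizing one new length per step. The differences are these. First, your numerical monoids $N_n = \langle a, a+1\rangle$ with $a = \lfloor p_n/\ell_n\rfloor$ are simpler than the paper's $\langle b,c\rangle$ obtained from a coprime splitting $\ell_n = s+t$ and a solution of $sb+tc=p$; your verification that $\mathsf L_{N_n}(p_n)=\{\ell_n\}$ via $\lceil p_n/(a+1)\rceil = \lfloor p_n/a\rfloor = \ell_n$ is clean and correct. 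Second, and more substantially, you prove the ACCP for the union $M$ by hand, via the reduced-part function $\mathsf N$ and finiteness of length sets of reduced elements, in the spirit of Proposition~\ref{prop:prime reciprocal}(3); the paper instead arranges (condition (1) of its Claim) that every $M_n$ lies inside the \emph{fixed} monoid $m_0^{-1}N_0 + M_{\mathbb P}$, which satisfies the ACCP by Proposition~\ref{prop:ACCP}, and then gets the ACCP for the union for free from Lemma~\ref{2.2}(1). That shortcut is available to you as well: since each $p_n$ is prime, $p_n^{-1}N_n \subseteq \langle 1/p_n\rangle \subseteq M_{\mathbb P}$, so your $M$ sits inside $\langle 1/\ell_1\rangle + M_{\mathbb P}$, which satisfies the ACCP by Proposition~\ref{prop:ACCP} — this would replace the entire ``technical heart'' of your argument with one line. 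Your direct argument does go through, though: the key valuation step (if $\mathsf v_{p_n}(q_0)\ge 0$ and the level-$n$ content $T_n$ is positive, then $T_n/p_n$ is a positive integer and the remaining summands witness $q_0 - T_n/p_n \in M$, contradicting $\mathsf N(q_0)=0$) is sound. One small inaccuracy: at a level with $\mathsf v_{p_n}(q_0) = -1$ the content $T_n$ is only determined modulo $p_n$, not ``pinned to a single value'' (one cannot subtract $1$ unless the reduced residue itself lies in $N_n$); but $T_n \le q_0 p_n$ is bounded, and boundedness over the finitely many relevant levels is all that your finiteness claim needs.
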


\begin{proof}
	Let $\emptyset \ne L \subseteq \nn_{\ge 2}$ be given. If $L$ is finite, then it follows from Proposition~\ref{realization-finite-set} that there exists a numerical monoid $N$ containing a non-zero element $m$ such that $\mathsf{L}_N(m) = L$. Then $M := m^{-1} N$  a Puiseux monoid isomorphic to $N$ with $\mathsf{L}_M(1) = \mathsf{L}_N(m) = L$.
		
	Now assume that $L$ is infinite. Fix $\ell_0 \in L$ such that $\ell_0 \ge 10$ and set $L_0 := L_{\le \ell_0}$. Let $(\ell_n)_{n \ge 1}$ be the strictly increasing sequence whose underlying set is $L \setminus L_0$ and, for each $n \in \nn$, set
	\[
		L_n := L_0 \cup \{\ell_1, \dots, \ell_n\}.
	\]
	By Proposition \ref{realization-finite-set}, there are a numerical monoid $N_0$ and some $m_0 \in N_0$ such that $\mathsf{L}_{N_0}(m_0) = L_0$.
	Let $M_{\mathbb P}$ be the Puiseux monoid generated by the set $A := \big\{ \frac1p : p \in \mathbb P \big\}$.
	By Proposition~\ref{prop:prime reciprocal}, the monoid $M_{\mathbb P}$ satisfies the ACCP and $A$ is the set of atoms of $M_{\mathbb P}$. Thus, by Proposition~\ref{prop:ACCP}, the Puiseux monoid $m_0^{-1} N_0 + M_{\mathbb P}$ satisfies the ACCP. We continue with the following claim.
	\smallskip
	
	\noindent \textsc{\bf Claim.} There exists an ascending chain $(M_n)_{n \ge 0}$ of finitely generated Puiseux monoids $M_n$ such that for every $n \ge 0$, the following three conditions hold.
	\begin{enumerate}
		\item $M_n \subseteq m_0^{-1} N_0 + M_{\mathbb P}$;
		\smallskip
		
		\item $1 \in M_n$ and $\mathsf{L}_{M_n}(1) = L_n$;
		\smallskip
		
		\item $\mathcal A (M_{n-1}) \subseteq \mathcal A (M_{n})$ for $n \ge 1$.
	\end{enumerate}
	
	\noindent
	\begin{proof}[Proof of Claim] 
		We proceed by induction on $n$. If $n=0$, then  $M_0 := m_0^{-1} N_0$ satisfies the required properties.
			
		Now suppose that for some $n \in \nn_0$ we have already constructed finitely generated Puiseux monoids $M_0, \dots, M_{n-1}$ with $M_0 \subseteq \dots \subseteq M_{n-1}$ such that conditions~(1)--(3) are satisfied.  To construct $M_n$, we choose a prime $p$  such that $p > 4\ell_n^2$ and $\mathsf v_p(M_{n-1}) \subseteq \nn_0$, that is $p \nmid \mathsf{d}(q)$ for any $q \in M_{n-1}^\bullet$ (this is possible because $M_{n-1}$ is finitely generated, and so $\mathsf{d}(M_{n-1}^\bullet)$ is a finite set). Now take $s, t \in \nn_{\ge 2}$ such that $s + t = \ell_n$ and $\gcd(s, t) = 1$: this can be done by taking
		\begin{itemize}
			\item $\{s, t\} = \{j, j+1\}$ if $\ell_n = 2j+1$ for some $j \in \nn$;
			\smallskip
			
			\item $\{s, t\} = \{2j - 1, 2j+1 \}$ if $\ell_n = 4j$ for some $j \in \nn$;
			\smallskip
			
			\item $\{s, t\} = \{2j-1, 2j+3 \}$ if $\ell_n = 4j+2$ for some $j \in \nn$.
		\end{itemize}
		Observe that for any of the choices of the set $\{s,t\}$ above, the fact that $\ell_n \ge 11$ implies that $\min\{s,t\} \ge 5$, the equality taking place when $\ell_n \in \{11,12,13\}$. Now set $m := \max\{s, t\}$, and note that the inequality $\ell_n \ge 11$ implies that $\min\{2s, 2t\} > m$. Since $\gcd(s, t) = 1$, we obtain that $S := \langle s, t \rangle$ is a numerical monoid with Frobenius number $\mathsf{f}(S) = s t - s - t$. In addition, the equality $\ell_n = s + t$ implies that $\ell_n^2 > s^2 + t^2$, which in turn implies that
		\[
			p > \ell_n^2 > s^2 + t^2 > st > \mathsf{f}(S).
		\]
		Therefore there exist $b, c \in \nn$ such that $b s + c t = p$. Since $\gcd(s, t) = 1$, the set of solutions $(x,y)$ of the Diophantine equation $x s + y t = p$ is $\{(b - jt, c + js) : j \in \zz\}$. If $b \le m$, then $b + 2t > m$ (because $\min\{2s, 2t\} > m$) and, therefore,
		\[
			c - 2s = \frac{1}{t}\big( p - b s - 2st \big) > \frac{1}{t}\big( p - 3 \ell_n^2 \big) > \frac{\ell_n^2}{t} > m,
		\]
		where the first inequality follows from $\max\{b,s,t\} < \ell_n$, the second inequality follows from $p > 4 \ell_n^2$, and the third inequality follows from $\ell_n > \max\{m,t\}$. Thus, if $b \le m$, then after replacing $(b, c)$ by another suitable solution of $x s + y t = p$ (namely, $(b + 2t, c - 2s)$), one can further assume that $\min \{b, c\} > m$. On the other hand, if $c \le m$, then $c + 2s > m$, and one can similarly obtain that
		\[
			b - 2t = \frac{1}{s}\big( p - ct - 2st \big) > \frac{1}{s}\big( p - 3 \ell_n^2 \big) > \frac{\ell_n^2}{s} > m.
		\]
		 Thus, if $c \le m$, then we can replace $(b, c)$ by the pair $(b - 2t, c + 2s)$ and further assume that the inequality $\min \{b, c\} > m$ holds. Hence we can assume, without loss of generality, that $\min\{b, c\} > m$.
		
		Consider the numerical monoid
		\[
			N := \langle b, c \rangle,
		\]
		and observe that $p = s b + t c \in N$. We assert that $p$ has  unique factorization in $N$. To argue this, first note that the equality $s b + t c = p$, along with the fact that $\gcd(b,c) = 1$, guarantees that any factorization of $p$ in $N$ must have the form $(s - jc) b + (t + j b) c$ for some $j \in \zz$. However, observe that $s - j c \le s - c < 0$ when $j \ge 1$, while $t + j b \le t - b < 0$ when $j \le -1$ (both statements follow from the previous assumption that $\min\{b, c\} > m$). As a consequence, the only factorization of $p$ in $N$ is $s b + t c$, and so $\mathsf{L}_N(p) = \{s + t\} = \{ \ell_n \}$.
		
		Now we define  the Puiseux monoid $M_n$ as
		\begin{equation} \label{construction}
			M_n := M_{n-1} + p^{-1} N.
		\end{equation}
		By the induction hypothesis, $M_{n-1} \subseteq m_0^{-1} N_0 + M_{\mathbb P}$. This, together with the fact that $p^{-1} N$ is a submonoid of $M_{\mathbb P}$, ensures that $M_n = M_{n-1} + p^{-1} N  \subseteq m_0^{-1} N_0 + (M_{\mathbb P} + p^{-1} N) = m_0^{-1} N_0 + M_{\mathbb P}$, which yields the condition~(1) above. In addition, it follows from part~(2) of Lemma~\ref{lem:auxiliary} that
		\[
			\mathsf{L}_{M_n}(1) = \mathsf{L}_{M_{n-1}}(1) \cup \mathsf{L}_N(p) = L_{n-1} \cup \{\ell_n\} = L_n.
		\]
		Therefore $M_n$ also satisfies condition~(2) above, and it satisfies condition~(3) by Lemma \ref{lem:auxiliary}. This completes the inductive step of our construction.
		\qedhere \, (Proof of the Claim)
	\end{proof}
	\smallskip
	
	Let $(M_n)_{n \ge 0}$ be as in the established claim and consider the Puiseux monoid
	\[
		M := \bigcup_{n \ge 0} M_n \ \subseteq \ m_0^{-1} N_0 + M_{\mathbb P}.
	\]
	Since $m_0^{-1} N_0 + M_{\mathbb P}$ is a reduced monoid satisfying the ACCP,  $M$ satisfies the  ACCP by part~(1) of Lemma~\ref{2.2}.
	Furthermore, Lemma \ref{lem:atoms of the union} implies that  $\mathcal{A}(M) \subseteq \bigcup_{n \ge 0} \mathcal{A}(M_n)$. In order to argue the reverse inclusion, take $a \in \bigcup_{n \ge 0} \mathcal{A}(M_n)$ and then pick $j \in \nn_0$ such that $a \in \mathcal{A}(M_j)$. Now write $a = b+c$ for some $b,c \in M$, and take $k \in \nn_{\ge j}$ large enough so that $b,c \in M_k$. Since $(\mathcal{A}(M_n))_{n \ge 0}$ is an ascending chain, the fact that $k \ge j$ implies that $a \in \mathcal{A}(M_k)$. Therefore either $b=0$ or $c=0$, and so $a \in \mathcal A (M)$.
	
	We finally show that $\mathsf{L}_M(1) = L$. Since $\mathcal{A}(M) = \bigcup_{n \ge 0} \mathcal{A}(M_n)$, for each factorization $z$ in $\mathsf{Z}_M(1)$,  there exists $n \in \nn$ such that $z \in \mathsf{Z}_{M_n}(1)$ and, therefore, $|z| \in L_n \subseteq L$. Thus, $\mathsf{L}_M(1) \subseteq L$. To show the reverse inclusion, we choose some  $\ell \in L$. Since $L = \bigcup_{n \ge 0} L_n$, there is some $n \in \nn_0$ such that $\ell \in L_n = \mathsf{L}_{M_n}(1)$. Let $w$ be a factorization in $\mathsf{Z}_{M_n}(1)$ with $|w| = \ell$. Since the atoms of $M_n$ remain atoms in $M$, it follows that $w \in \mathsf{Z}_M(1)$, whence $\ell = |w| \in \mathsf{L}_{M}(1)$.
\end{proof}

In \cite[Proposition~3.1 and Theorems~3.3 and~3.4]{Ge-Go-Tr21}, a variety of properties of Puiseux monoids were considered. It turns out that Puiseux monoids satisfying the ACCP and having an infinite length set (as those in the statement of Theorem~\ref{3.5}) do not satisfy any of these properties. We briefly argue this observation.

\begin{remark} \label{seminormal}
	Let $M$ be a nontrivial Puiseux monoid. Then it follows from \cite[Proposition~3.1]{Ge-Go-Tr21} that $M$ is root closed if and only if $M$ is seminormal, which happens precisely when $M$ is a valuation monoid. Since atomic valuation monoids are discrete valuation monoids, whence factorial, a monoid that satisfies the hypothesis in the statement of Theorem~\ref{3.5} cannot be seminormal. Moreover, one can readily verify that all nontrivial Puiseux monoids are primary, and it follows from~\cite[Theorem~3.4]{Ge-Go-Tr21} that those that are strongly primary are also BF-monoids. As a consequence, if $M$ has an infinite length set, then $M$ is not strongly primary, the conductor $(M : \widehat M)$ to its complete integral closure $\widehat M$ is empty, and $0$ is a limit point of $M^{\bullet}$ (\cite[Theorem 3.4]{Ge-Go-Tr21}).
\end{remark}

\bigskip
\section{Proof of Theorem \ref{1.1}}   \label{4}
\label{sec:mainresult}

The main purpose of this last section is to prove our main result, Theorem~\ref{1.1}. To do so, we use not only the results we have established in the previous two sections but also the following proposition.

\begin{prop} \label{4.1}
	There exist a $\mathbb Q$-vector space $V$ and a positive submonoid $M \subseteq V$ satisfying the {\rm ACCP} such that, for each nonempty subset $L \subseteq \nn_{\ge 2}$, there is some $b_L \in M$ with $\mathsf L_M (b_L) = L$. Moreover, if $\min L > 2$, then there are uncountably many  elements $b \in M$ with $\mathsf L_M(b) = L$.
\end{prop}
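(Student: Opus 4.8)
The plan is to take for $M$ an enormous coproduct of the Puiseux monoids produced by Theorem~\ref{3.5}. Concretely, let $\{L_j : j \in J\}$ be an enumeration of all nonempty subsets of $\nn_{\ge 2}$, so $|J| = 2^{\aleph_0}$, and for each $j \in J$ use Theorem~\ref{3.5} to fix a Puiseux monoid $M_j$ satisfying the ACCP with $1 \in M_j$ and $\mathsf L_{M_j}(1) = L_j$. Set $V := \bigoplus_{j \in J} \qq$, a $\qq$-vector space which I endow with the coordinatewise order, and let
\[
	M := \coprod_{j \in J} M_j \subseteq \bigoplus_{j \in J} \qq_{\ge 0} = V^+ ,
\]
so that $M$ is a positive submonoid of $V$. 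Since each $M_j$ satisfies the ACCP, so does $M$ by part~(2) of Lemma~\ref{2.2}.

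Next I would record how factorizations behave in this coproduct. Write $\iota_j \colon M_j \to M$ for the canonical embedding. Each $\iota_j(M_j)$ is divisor-closed in $M$: if $y \mid_M \iota_j(c)$, then comparing coordinates $k \ne j$ (which lie in $\qq_{\ge 0}$ and sum to $0$) forces $y \in \iota_j(M_j)$. Hence each $\iota_j(a)$ with $a \in \mathcal A(M_j)$ is an atom of $M$, and since $\bigcup_{j \in J} \iota_j(\mathcal A(M_j))$ is a generating set of $M$ while the set of atoms is the smallest generating set of a reduced monoid (\cite[Proposition~1.1.7]{Ge-HK06a}), we obtain $\mathcal A(M) = \bigcup_{j \in J} \iota_j(\mathcal A(M_j))$. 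It follows that a factorization of $b = (b_j)_{j \in J} \in M$ amounts exactly to a choice, for every $j$, of a factorization of $b_j$ in $M_j$, with lengths adding up; therefore
\[
	\mathsf L_M(b) = \sum_{j \in J} \mathsf L_{M_j}(b_j),
\]
a finite sumset since $b_j = 0$ for almost all $j$. In particular $\mathsf L_M(\iota_j(1)) = \mathsf L_{M_j}(1) = L_j$, so $b_{L_j} := \iota_j(1)$ witnesses the first assertion.

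For the ``moreover'' part, fix $L \subseteq \nn_{\ge 2}$ with $\min L > 2$ and set $L' := \{\ell - 1 : \ell \in L\}$. The hypothesis $\min L \ge 3$ guarantees that $L'$ is a nonempty subset of $\nn_{\ge 2}$, so $L' = L_{j_0}$ for some $j_0 \in J$; put $c := \iota_{j_0}(1) \in M$, so $\mathsf L_M(c) = L'$. For each $j \in J \setminus \{j_0\}$ pick an atom $a_j \in \mathcal A(M_j)$ (possible because $L_j \ne \emptyset$ makes $M_j$ a nontrivial atomic monoid), and set $b^{(j)} := \iota_j(a_j) + c \in M$. Then $b^{(j)}$ is supported exactly on the coordinates $j$ and $j_0$, so by the displayed formula $\mathsf L_M(b^{(j)}) = \mathsf L_{M_j}(a_j) + \mathsf L_{M_{j_0}}(1) = \{1\} + L' = L$. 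The elements $b^{(j)}$ for $j \in J \setminus \{j_0\}$ are pairwise distinct (for $j \ne j'$ they differ in the $j$-th coordinate), and $|J \setminus \{j_0\}| = 2^{\aleph_0}$, so there are uncountably many $b \in M$ with $\mathsf L_M(b) = L$.

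The only step requiring genuine care is the factorization bookkeeping in the second paragraph — the identification of $\mathcal A(M)$ and the resulting additivity of length sets over the coproduct — but this is routine once one observes that each factor is divisor-closed. I would also remark that the strict inequality $\min L > 2$ is used precisely to ensure that $L = 1 + L'$ with $L'$ again a nonempty subset of $\nn_{\ge 2}$, which is why the uncountability statement is stated for $\min L > 2$ rather than $\min L \ge 2$.
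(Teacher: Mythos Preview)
Your proof is correct and follows essentially the same approach as the paper: both take the coproduct over all nonempty $L \subseteq \nn_{\ge 2}$ of the Puiseux monoids supplied by Theorem~\ref{3.5}, invoke Lemma~\ref{2.2}(2) for the ACCP, use divisor-closedness of the coordinate submonoids to transfer length sets, and for the uncountability statement add an atom from a foreign coordinate to a realizer of $L' = -1 + L$. Your write-up is slightly more explicit about the factorization bookkeeping (identifying $\mathcal A(M)$ and the additivity $\mathsf L_M(b) = \sum_j \mathsf L_{M_j}(b_j)$), whereas the paper relies directly on the general fact, recorded in Section~\ref{sec:background}, that length sets are preserved in divisor-closed submonoids.
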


\begin{proof}
	Let $\Omega$ be the set of all nonempty subsets of $\mathbb N_{\ge 2}$, and note that $\Omega$ is uncountable. In light of Theorem~\ref{3.5}, for each $L \in \Omega$, there exist a Puiseux monoid $M_L$ satisfying the ACCP and an element $b_L \in M_L$ such that $\mathsf L_{M_L} (b_L) = L$.  Then the monoid
	\[
		M := \bigoplus_{L \in \Omega} M_L \ \subseteq \  \bigoplus_{L \in \Omega} \mathbb Q = \mathbb Q^{(\Omega)} =: V
	\]
	is a positive submonoid of $V$, and it satisfies the ACCP by part~(2) of Lemma \ref{2.2}. Since $M_L \subseteq V$ is a divisor-closed submonoid of $M$, we infer that, for every $L \in \Omega$,
	\[
		L = \mathsf L_{M_L} (b_L) = \mathsf L_M (b_L).
	\]
	To argue the last statement, suppose that $\min L > 2$ and set $L' : = - 1 + L \subseteq \mathbb N_{\ge 2}$. Now choose an element $b_{L'} \in M_{L'}$ with $\mathsf L_{M} (b_{L'})=L'$, and then choose a subset $S \in \Omega \setminus \{L'\}$. Since $M_S$ is a nontrivial Puiseux monoid satisfying the ACCP, we can pick $a_S \in \mathcal{A}(M_S)$ and observe that
	\[
		\mathsf L_M (a_S + b_{L'}) = \mathsf L_M (a_S) + \mathsf L_M ( b_{L'}) = 1 + L' = L.
	\]
	Now the fact that $\Omega$ is an uncountable set allows us to choose $S$ in uncountably many ways, yielding uncountably many pairwise distinct element $a_S + b_{L'} \in M$ with length set $L$.
\end{proof}

\smallskip
The primary result of this paper, Theorem~\ref{1.1}, states that there exists a monoid algebra satisfying the ACCP and having its system of length sets as large as it can possibly be. Before proving that, we briefly revise some basics of monoid algebras (these and more can be found in~\cite{Gi84}). Let $D$ be a commutative ring with $D \ne 0$, and let $M$ be an additively written commutative semigroup. Then the elements   of the semigroup algebra $D[M]$ can  be written uniquely in the form
\[
	f = \sum_{m \in M} c_m X^m,
\]
where $c_m \in D$ for all $m \in M$ and $c_m=0$ for almost all $m \in M$; that is, the set $\{X^m : m \in M \}$ is a basis for $D[M]$ as a $D$-module. The following statements hold.
\begin{itemize}
	\item[(i)] The semigroup algebra $D[M]$ is an integral domain if and only if $D$ is an integral domain and~$M$ is a cancellative and torsion-free monoid (\cite[Theorem~8.1]{Gi84}).
	\smallskip
	
	\item[(ii)] Suppose that $D[M]$ is an integral domain and $M$ is reduced. Then $D[M]$ satisfies the ACCP if and only if  both $M$ and $D$ satisfy the ACCP (\cite[Theorem 13]{An-Ju15a}).
\end{itemize}

\smallskip
\begin{proof}[Proof of Theorem \ref{1.1}]
Let $D$ be an integral domain satisfying the ACCP, and let $M \subseteq V$ be as in Proposition~\ref{4.1}. By Properties (i) and (ii), the monoid algebra $D[M]$ satisfies the ACCP. Now observe that the assignments $m \mapsto X^m$ for every $m \in M$ induce a monoid isomorphism
\[
	\varphi \colon M \to H:= \{X^m : m \in M\},
\]
from the additive monoid $M$ to the multiplicative submonoid of $D[M]$ consisting of all monic monomials. By \cite[Theorem 11.1]{Gi84}, the equality $D[M]^{\times} = D^{\times}$ holds. Also, as $M$ is cancellative and torsion-free, every divisor of a nonzero monomial in $D[M]$ must be a nonzero monomial. Thus, the monoid $D^{\times}H$ is a divisor-closed submonoid of $D[M] \setminus \{0\}$.

Now let $L$ be a nonempty subset of $\mathbb N_{\ge 2}$. 
By Proposition \ref{4.1}, there exists some $b_L \in M$ such that $L = \mathsf L_M (b_L)$, whence
\[
	L = \mathsf L_M (b_L) = \mathsf L_H \big( \varphi (b_L) \big) = \mathsf L_{HD^{\times}} \big( \varphi (b_L) \big) = \mathsf L_{D[M]} \big( \varphi (b_L) \big).
\]
\end{proof}

\smallskip
We conclude this paper with two observations on the monoid algebra constructed in the given proof of Theorem~\ref{1.1}.

\begin{remark}
	(1) Let $D$ be an integral domain, and let $M$ be a torsion-free monoid. If the monoid algebra $D[M]$ has an infinite length set, then it cannot be a Mori domain, whence it can neither be a Noetherian domain nor a Krull domain. Recall that $D[M]$ is seminormal if and only if $D$ and $M$ are seminormal (\cite[Theorem~4.76]{Br-Gu09a}). Since the monoid $M$ constructed in Proposition~\ref{4.1} is not seminormal (see Remark~\ref{seminormal}), the monoid algebra $D[M]$ constructed in Theorem~\ref{1.1} is not seminormal, whence it is neither root closed nor integrally closed.
\smallskip

	(2) Let $L \subseteq \mathbb N_{\ge 2}$ be a nonempty subset with $\min L > 2$, and let $D[M]$ be as in the proof of Theorem~\ref{1.1}. As we did in the proof of Proposition~\ref{4.1}, we can choose uncountably many (non-associate) elements $b \in D[M]$ such that $\mathsf L_{D[M]}(b) = L$.
\end{remark}

\bigskip
\noindent {\bf Acknowledgment.} The authors thank Andreas Reinhart for fruitful discussions and for his help with Lemmas~\ref{2.3} and~\ref{2.4}.

\providecommand{\bysame}{\leavevmode\hbox to3em{\hrulefill}\thinspace}
\providecommand{\MR}{\relax\ifhmode\unskip\space\fi MR }
\providecommand{\MRhref}[2]{
  \href{http://www.ams.org/mathscinet-getitem?mr=#1}{#2}
}
\providecommand{\href}[2]{#2}


\begin{thebibliography}{10}

\bibitem{AGH21} 
C. Aguilera, M. Gotti, and A.~F. Hamelberg, \emph{Factorization in prime reciprocal Puiseux monoids}. Preprint on arXiv: https://arxiv.org/abs/2112.04048.

\bibitem{AAZ90} 
D.~D. Anderson, D.~F. Anderson, and M. Zafrullah, \emph{Factorization in integral domains}, J. Pure Appl. Algebra \textbf{69} (1990), 1--19.
	
\bibitem{An-Ju15a}
D.~D. Anderson and J.~R. Juett, \emph{Long length functions}, J. Algebra \textbf{426} (2015), 327--343.

\bibitem{B-B-N-S23a}
J.~P. Bell, K. Brown, Z. Nazemian, and D. Smertnig, \emph{On noncommutative bounded factorization domains and prime rings}, J. Algebra \textbf{622} (2023), 404--449.

\bibitem{BC19} 
J.~G. Boynton and J. Coykendall, \emph{An example of an atomic pullback without the ACCP}, J. Pure Appl. Algebra \textbf{223} (2019), 619--625.

\bibitem{Br-Gu09a}
W. Bruns and J. Gubeladze, \emph{Polytopes, {R}ings, and {K-T}heory}, Springer, 2009.

\bibitem{Ch-Fa-Wi22a}
G.~W. Chang, V. Fadinger, and D. Windisch, \emph{Semigroup rings as weakly {K}rull domains}, Pacific J. Math. \textbf{318} (2022), 433--452.

\bibitem{CGG20a} 
S.~T. Chapman, F. Gotti, and M. Gotti, \emph{Factorization invariants of Puiseux monoids generated by geometric sequences}, Comm. Algebra \textbf{48} (2020), 380--396.
	
\bibitem{Co-Go19a}
J. Coykendall and F. Gotti, \emph{On the atomicity of monoid algebras}, J. Algebra \textbf{539} (2019), 138--151.

\bibitem{Co-Go25a}
\bysame, \emph{Atomicity in integral domains}. In: Recent {P}rogress in {R}ings and {F}actorization {T}heory, Springer, 2025.

\bibitem{Fa-Wi22a}
V. Fadinger and D. Windisch, \emph{A characterization of weakly {K}rull monoid algebras}, J. Algebra \textbf{590} (2022), 277--292.

\bibitem{Fa-Wi22c}
\bysame, \emph{On the distribution of prime divisors in class groups of affine monoid algebras}, Comm. Algebra \textbf{50} (2022), 2973--2982.

\bibitem{Fa-Wi22b}
\bysame, \emph{On the distribution of prime divisors in {K}rull monoid algebras}, J. Pure Appl. Algebra \textbf{226} (2022), Paper No. 106887, 8pp.

\bibitem{Fa-Wi24a}
\bysame, \emph{Lengths of factorizations of integer-valued polynomials on {K}rull domains with prime elements}, Arch. Math. (Basel), to appear (2024).

\bibitem{Fa-Zh23a}
V. Fadinger and Q. Zhong, \emph{On product-one sequences over subsets of groups}, Periodica Math. Hung. \textbf{86} (2023), 454--494.

\bibitem{Ge-Go-Tr21}
A. Geroldinger, F. Gotti, and S. Tringali, \emph{On strongly primary monoids with a focus on {P}uiseux monoids}, J. Algebra \textbf{567} (2021), 310--345.

\bibitem{Ge-HK06a}
A.~Geroldinger and F.~Halter-Koch, \emph{Non-{U}nique {F}actorizations. {A}lgebraic, {C}ombinatorial and {A}nalytic {T}heory}, Pure and Applied Mathematics, vol. 278, Chapman \& Hall/CRC, 2006.

\bibitem{Ge-Sc18e}
A. Geroldinger and W.~A. Schmid, \emph{A realization theorem for sets of lengths in numerical monoids}, Forum Math. \textbf{30} (2018), 1111--1118.

\bibitem{Ge-Sc-Zh17b}
A. Geroldinger, W.~A. Schmid, and Q. Zhong, \emph{Systems of sets of lengths: transfer {K}rull monoids versus weakly {K}rull monoids}. In: Rings, Polynomials, and Modules, Springer, Cham, 2017, pp.~191--235.

\bibitem{Ge-Zh20a}
A. Geroldinger and Q. Zhong, \emph{Factorization theory in commutative monoids}, Semigroup Forum \textbf{100} (2020), 22--51.

\bibitem{Gi84}
R. Gilmer, \emph{Commutative {S}emigroup {R}ings}, The University of Chicago Press, 1984.

\bibitem{fG19} 
F. Gotti, \emph{Increasing positive monoids of ordered fields are FF-monoids}, J. Algebra \textbf{518} (2019), 40--56.

\bibitem{Go22a}
\bysame, \emph{On semigroup algebras with rational exponents}, Comm. Algebra \textbf{50} (2022), 3--18.

\bibitem{Go19a} 
\bysame, \emph{Systems of sets of lengths of {P}uiseux monoids}, J. Pure Appl. Algebra \textbf{223} (2019), 1856--1868.

\bibitem{fG20} 
\bysame, \emph{The system of sets of lengths and the elasticity of submonoids of a finite-rank free commutative monoid}, J. Algebra Appl. \textbf{19} (2020), Paper No. 2050137, 18pp.

\bibitem{Go-Go18}
F.~Gotti and M.~Gotti, \emph{Atomicity and boundedness of monotone {P}uiseux monoids}, Semigroup Forum \textbf{96} (2018), 536--552.

\bibitem{GL23} 
F. Gotti and B. Li, \emph{Atomic semigroup rings and the ascending chain condition on principal ideals}, Proc. Amer. Math. Soc. \textbf{151} (2023), 2291--2302.
	
\bibitem{GL23a} 
\bysame,  \emph{Divisibility and a weak ascending chain condition on principal ideals}. Submitted. Preprint on arXiv: https://arxiv.org/abs/2212.06213.

\bibitem{GL22} 
\bysame, \emph{Divisibility in rings of integer-valued polynomials}, New York J. Math. \textbf{28} (2022), 117--139.

\bibitem{aG74} 
A. Grams, \emph{Atomic rings and the ascending chain condition for principal ideals}, Math. Proc. Cambridge Philos. Soc. \textbf{75} (1974), 321--329.
	
\bibitem{HK98}
F. Halter-Koch, \emph{Ideal {S}ystems. {A}n {I}ntroduction to {M}ultiplicative {I}deal {T}heory}, Marcel Dekker, 1998.

\bibitem{Ki01b}
H. Kim, \emph{Factorization in monoid domains}, Comm. Algebra \textbf{29} (2001), 1853--1869.

\bibitem{MN91a}
P. Meyer-Nieberg, \emph{Banach {L}attices}, Universitext, Springer, 1991.

\bibitem{mR93} 
M. Roitman, \emph{Polynomial extensions of atomic domains}, J. Pure Appl. Algebra \textbf{87} (1993), 187--199.
	
\bibitem{Sc16a}
W.~A. Schmid, \emph{Some recent results and open problems on sets of lengths of {K}rull monoids with finite class group}. In: {M}ultiplicative {I}deal {T}heory and {F}actorization {T}heory, Springer, 2016, pp.~323--352.

\bibitem{aZ82} 
A. Zaks, \emph{Atomic rings without a.c.c. on principal ideals}, J. Algebra \textbf{74} (1982), 223--231.

\bibitem{Tr23a}
S. Tringali, \emph{A characterization of atomicity}, Math. Proc. Cambridge Philos. Soc. \textbf{175} (2023), 459--465.

\end{thebibliography}
\end{document}